\newtheorem{theorem}{Theorem}
\newtheorem{lemma}{Lemma}
\theoremstyle{definition}
\newtheorem{definition}{Definition}
\begin{document}
%
\title{A New Nonconvex Strategy to Affine Matrix Rank Minimization Problem}
%
%
%

\author{Angang~Cui,
        Jigen~Peng,
        Haiyang~Li,
        Junxiong Jia,
        and~Meng~Wen
\thanks{A. Cui, J. Peng and J. Jia are with the School of Mathematics and Statistics, Xi'an Jiaotong University, Xi'an, 710049, China.
e-mail: (cuiangang@163.com; jgpengxjtu@126.com; jjx323@xjtu.edu.cn).}
\thanks{H. Li and M. Wen are with the School of Science, Xi'an Polytechnic University, Xi'an, 710048, China. e-mail: (fplihaiyang@126.com; wen5495688@163.com).}
\thanks{Manuscript received, ; revised , .}}

%
%

\markboth{Journal of \LaTeX\ Class Files,~Vol.~, No.~, ~}%
{Shell \MakeLowercase{\textit{et al.}}: Bare Demo of IEEEtran.cls for IEEE Journals}
%



\maketitle

\begin{abstract}
The affine matrix rank minimization (AMRM) problem is to find a matrix of minimum rank that satisfies a given linear system constraint. It has many applications in some
important areas such as control, recommender systems, matrix completion and network localization. However, the problem (AMRM) is NP-hard in
general due to the combinational nature of the matrix rank function. There are many alternative functions have been proposed to substitute the matrix rank function,
which lead to many corresponding alternative minimization problems solved efficiently by some popular convex or nonconvex optimization algorithms. In this paper, we
propose a new nonconvex function, namely, $TL_{\alpha}^{\epsilon}$ function (with $0\leq\alpha<1$ and $\epsilon>0$), to approximate the rank function, and translate the
NP-hard problem (AMRM) into the $TL_{p}^{\epsilon}$ function affine matrix rank minimization (TLAMRM) problem. Firstly, we study the equivalence of problem (AMRM) and (TLAMRM),
and proved that the uniqueness of global minimizer of the problem (TLAMRM) also solves the NP-hard problem (AMRM) if the linear map $\mathcal{A}$ satisfies a restricted
isometry property (RIP). Secondly, an iterative thresholding algorithm is proposed to solve the regularization problem (RTLAMRM) for all $0\leq\alpha<1$ and $\epsilon>0$. At last,
some numerical results on low-rank matrix completion problems illustrated that our algorithm is able to recover a low-rank matrix, and the extensive numerical on image
inpainting problems shown that our algorithm performs the best in finding a low-rank image compared with some state-of-art methods.
\end{abstract}

\begin{IEEEkeywords}
Affine matrix rank minimization problem, $TL_{\alpha}^{\epsilon}$ function, Equivalence, Iterative thresholding algorithm.
\end{IEEEkeywords}

%
\IEEEpeerreviewmaketitle

\section{Introduction}\label{section1}
The problem of recovering a low-rank matrix from a given linear system constraint, namely, affine matrix rank minimization (AMRM) problem, has been actively studied in
different fields such as control \cite{faze1,faze2}, recommender systems \cite{candes3,jan4}, matrix completion \cite{recht5,candes6,faze7,candes8,cui9} and network
localization \cite{ji11}. This rank minimization problem can be described as follows
\begin{equation}\label{equ1}
(\mathrm{AMRM})\ \ \ \ \ \ \min_{X\in \mathbb{R}^{m\times n}} \ \mbox{rank}(X)\ \ \mathrm{s.t.} \ \  \mathcal{A}(X)=b,
\end{equation}
where $b\in \mathbb{R}^{d}$ is a given vector, and $\mathcal{A}: \mathbb{R}^{m\times n}\mapsto \mathbb{R}^{d}$ is a linear map determined by $d$ matrices
$A_{1}, A_{2}, \cdots, A_{p}\in \mathbb{R}^{m\times n}$, i.e.,
$$\mathcal{A}(X):=\big(\langle A_{1},X\rangle, \langle A_{2},X\rangle,\cdots, \langle A_{d},X\rangle\big)^{\top}\in \mathbb{R}^{d}$$
with $\langle A_{i},X\rangle=\mathrm{trace}(A_{i}^{\top}X)$, $i=1,2,\cdots,d$. Without loss of generality, we assume that $m\leq n$ throughout this paper.
An important special case of the problem (AMRM) is the matrix completion (MC) problem:
\begin{equation}\label{equ2}
(\mathrm{MC})\ \ \ \ \ \ \min_{X\in \mathbb{R}^{m\times n}} \ \mbox{rank}(X)\ \ \mathrm{s.t.} \ \  X_{i,j}=M_{i,j},\ \ (i,j)\in \Omega,
\end{equation}
where $X, M\in \mathbb{R}^{m\times n}$ are both $m\times n$ real matrices, $\Omega$ is the set of indices of samples and the subset $\{M_{i,j}| (i,j)\in \Omega\}$ of the
entries is known. This problem has been widely applied in signal and image processing \cite{faze7,singer12}, machine learning \cite{srebro13}, computer vision \cite{hu14} and
the famous Netflix problem \cite{net15}. Unfortunately, problem (\ref{equ1}) is NP-hard \cite{recht5,faze7} for which all known finite time algorithms have at least doubly
exponential running times in both theory and practice. To overcome such a difficulty, Recht\cite{recht5}, Fazel \cite{faze7} and other researchers (e.g., \cite{candes3,candes6,cai16})
introduced the convex envelope of $\mathrm{rank}(X)$ on the set $\{X\in \mathbb{R}^{m\times n}: \|X\|_{2}\leq1\}$, namely, nuclear-norm $\|X\|_{\ast}$ of $X$, to relax the rank of $X$.
It leads to the nuclear-norm affine matrix rank minimization (NAMRM) problem
\begin{equation}\label{equ3}
(\mathrm{NAMRM})\ \ \ \ \ \ \min_{X\in \mathbb{R}^{m\times n}} \ \|X\|_{\ast}\ \ \mathrm{s.t.} \ \  \mathcal{A}(X)=b
\end{equation}
for the constrained problem and
\begin{equation}\label{equ4}
(\mathrm{RNAMRM})\ \ \ \ \ \ \min_{X\in \mathbb{R}^{m\times n}} \Big\{\|\mathcal{A}(X)-b\|_{2}^{2}+\lambda\|X\|_{\ast}\Big\}
\end{equation}
for the regularized unconstrained problem, where $\lambda>0$ is the regularization parameter and $\|X\|_{\ast}=\sum_{i=1}^{m}\sigma_{i}(X)$ is defined as the sum of the nonzero singular
values of $X\in \mathbb{R}^{m\times n}$.

Recht et al.\cite{recht5} have shown that if a certain restricted isometry property holds for the linear map $\mathcal{A}$, the minimum rank solution can be recovered by solving
the problem (NAMRM), and the sharp results can be seen in \cite{cai17,tony18}. Many algorithms for solving the problems (NAMRM) and (RNAMRM) have been
proposed. These include semidefinite programming and interior point SDP solver \cite{recht5,tut19}, singular value thresholding (SVT) algorithm \cite{cai16}, accelerated proximal
gradient (APG) algorithm \cite{tho20}, inexact proximal point algorithms \cite{liu21}, fixed point and Bregman iterative algorithms \cite{ma22,gol23}. However, the problem (RNAMRM)
may yield a matrix with much higher rank and need more observations to recover a real low-rank matrix \cite{candes3}, and it may tend to lead to biased estimation by shrinking all
the singular values toward zero simultaneously \cite{cai16}.

On the other hand, with recent development of non-convex relaxation approaches in sparse signal recovery problems, a
large number of non-convex surrogate functions have been proposed to approximate the $\l_{0}$-norm, including $\l_{p}$-norm $(0<p<1)$ \cite{char24,xu25,fou26,lai27,chen28,dau29,mou30,sun31,char32,peng33,xu34,cao35}, MCP (Mini-max Concave Plus) \cite{zhang36}, SCAD (Smoothly Clipped Absolute Deviation) \cite{fan37},
Laplace \cite{west38,Trzasko39}, Logarithm \cite{thi40}, capped $\l_{1}$-norm \cite{zhang41}, smoothed $\l_{0}$-norm \cite{moh42}. Inspired by the good performance of the non-convex
surrogate functions in sparse signal recovery problems, these popular nonconvex surrogate functions have been extended on the singular values to better approximate the rank function
(e.g., \cite{lu43,lu44,li45,zhang46,chen47,Moh48,lai49,moh50}). Some empirical evidence has also shown that the corresponding non-convex algorithms can really make a better recovery
in some matrix rank minimization problems. Different from previous studies, in this paper, a new continuous promoting low-rank function
\begin{equation}\label{equ5}
TL_{\alpha}^{\epsilon}(X)=\sum_{i=1}^{m}\varphi_{\alpha}^{\epsilon}(\sigma_{i}(X))=\sum_{i=1}^{m}\frac{(\sigma_{i}(X))^{1/2}}{(\sigma_{i}(X)+\epsilon)^{1/2-\alpha}}
\end{equation}
in terms of the singular values of matrix $X$ is considered to approximate the rank function, where the continuous function
\begin{equation}\label{equ6}
\varphi_{\alpha}^{\epsilon}(|t|)=\frac{|t|^{1/2}}{(|t|+\epsilon)^{1/2-\alpha}}
\end{equation}
is the $TL_{\alpha}^{\epsilon}$ function for all $0\leq\alpha<1$ and $\epsilon>0$ . It is easy to verify that the $TL_{\alpha}^{\epsilon}$  function $\varphi_{\alpha}^{\epsilon}$ is concave for
any $\alpha\in (0,1/2]$. Moreover, with the change of parameters $\alpha$ and $\epsilon$, we have
$$
\lim_{\alpha\rightarrow 0^{+}}\lim_{\epsilon\rightarrow0^{+}}\varphi_{\alpha}^{\epsilon}(|t|)=\left\{
    \begin{array}{ll}
      0, & {\mathrm{if} \ t=0;} \\
      1, & {\mathrm{if} \ t\neq 0,}
    \end{array}
  \right.
$$
and therefore the function (\ref{equ5}) interpolates the rank of matrix $X$:
\begin{equation}\label{equ7}
\begin{array}{llll}
&&\displaystyle\lim_{\alpha\rightarrow 0^{+}}\lim_{\epsilon\rightarrow0^{+}}TL_{\alpha}^{\epsilon}(X)\\
&&=\displaystyle\lim_{\alpha\rightarrow 0^{+}}\lim_{\epsilon\rightarrow0^{+}}\sum_{i=1}^{m}\varphi_{\alpha}^{\epsilon}(\sigma_{i}(X))\\
&&=\displaystyle\lim_{\alpha\rightarrow 0^{+}}\lim_{\epsilon\rightarrow0^{+}}\sum_{i=1}^{m}\frac{(\sigma_{i}(X))^{1/2}}{(\sigma_{i}(X)+\epsilon)^{1/2-\alpha}}\\
&&=\mathrm{rank}(X).
\end{array}
\end{equation}
Then, by this transformation, we propose the new approximation optimization problem of the problem (AMRM) which has the following form
\begin{equation}\label{equ8}
(\mathrm{TLAMRM})\ \ \ \min_{X\in \mathbb{R}^{m\times n}} TL_{\alpha}^{\epsilon}(X)\ \ \mathrm{s.t.} \ \  \mathcal{A}(X)=b
\end{equation}
for the constrained problem and
\begin{equation}\label{equ9}
(\mathrm{RTLAMRM})\ \ \ \min_{X\in \mathbb{R}^{m\times n}} \Big\{\|\mathcal{A}(X)-b\|_{2}^{2}+\lambda TL_{\alpha}^{\epsilon}(X)\Big\}
\end{equation}
for the regularization problem.

This paper is organized as follows. Section \ref{section2} presents some useful notions and crucial preliminary results that are used in this paper. Section \ref{section3}
presents the equivalence between minimization problems (AMRM) and (TLAMRM). Section \ref{section4} presents an iterative thresholding algorithm to solve the problem (RTLAMRM)
for all $0\leq\alpha<1$ and $\epsilon>0$. The experimental results are presented in Section \ref{section5}. Finally, some conclusion remarks are presented in Section \ref{section6}.

\section{Notions and preliminary results} \label{section2}
In this section, we present some useful notions and crucial preliminary results that are used in this paper.

\subsection{Notions} \label{section2-1}
The space of $m\times n$ real matrices is denoted by $\mathbb{R}^{m\times n}$. Given any $X\in\mathbb{R}^{m\times n}$, the Frobenius norm of $X$ is
denoted by $\|X\|_{F}$, namely, $\|X\|_{F}=\sqrt{\mathrm{tr}(X^{\top}X)}$, where $\mathrm{tr}(\cdot)$ denotes the trace of a matrix. Given any matrices
$X,Y\in \mathbb{R}^{m\times n}$, the standard inner product of matrices $X$ and $Y$ is denoted by $\langle X,Y\rangle$, and $\langle X,Y\rangle=\mathrm{Tr}(Y^{\top}X)$.
The linear map $\mathcal{A}:\mathbb{R}^{m\times n}\mapsto \mathbb{R}^{d}$ determined by $d$ matrices $A_{1}, A_{2}, \cdots, A_{d}\in \mathbb{R}^{m\times n}$
is given by $\mathcal{A}(X)=(\langle A_{1},X\rangle, \langle A_{2},X\rangle,\cdots, \langle A_{d},X\rangle)^{\top}\in \mathbb{R}^{d}$. Define
$A=(vec(A_{1}), vec(A_{2}), \cdots, vec(A_{d}))^{\top}\in \mathbb{R}^{d\times mn}$ and $x=vec(X)\in \mathbb{R}^{mn}$, we have $\mathcal{A}(X)=Ax$.
Let $\mathcal{A}^{\ast}$ denote the adjoint of $\mathcal{A}$. Then for any $y\in \mathbb{R}^{d}$, we have $\mathcal{A}^{\ast}(y)=\sum_{i=1}^{d}y_{i}A_{i}$.
The singular value decomposition of matrix $X\in \mathbb{R}^{m\times n}$ is $X=U_{X}[\mathrm{Diag}(\sigma(X)),\mathbf{0}_{m,n-m}] V_{X}^{\top}$, where $U_{X}$ is an $m\times m$
unitary matrix, $V_{Y}$ is an $n\times n$ unitary matrix, $[\mathrm{Diag}(\sigma(X)),\mathbf{0}]\in \mathbb{R}^{m\times n}$, $\mathbf{0}_{m,n-m}\in \mathbb{R}^{m,n-m}$ is a $m\times (n-m)$
zero matrix, and the vector $\sigma(X): \sigma_{1}(X)\geq\sigma_{2}(X)\geq\cdots\geq\sigma_{r}(X)\geq\sigma_{r+1}(X)=\cdots=\sigma_{m}(X)=0$, arranged in
descending order, denotes the singular value vector of matrix $X$.

\subsection{Preliminary results} \label{section2-2}

\begin{lemma}\label{lem1}{\rm(see \cite{recht5})}
Let $M, N\in \mathbb{R}^{m\times n}$. Then there exist matrices $N_{1}, N_{2}\in \mathbb{R}^{m\times n}$ such
that\\
(1) $N=N_{1}+N_{2}$;\\
(2) $\mathrm{rank}(N_{1})\leq 2\mathrm{rank}(M)$;\\
(3) $MN_{2}^{\top}=\mathbf{0}_{m,m}$ and $M^{\top}N_{2}=\mathbf{0}_{n,n}$;\\
(4) $\langle N1_{1}, N_{2}\rangle=0$.
\end{lemma}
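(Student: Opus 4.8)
The plan is to build $N_2$ out of the part of $N$ that is ``orthogonal'' to both the column space and the row space of $M$, and then set $N_1 = N - N_2$. Concretely, let $r = \mathrm{rank}(M)$ and take a (thin) singular value decomposition $M = U \Sigma V^{\top}$, where the first $r$ columns of $U$ span the column space of $M$ and the first $r$ columns of $V$ span the row space of $M$. Write $P_U$ for the orthogonal projector onto the column space of $M$ (an $m\times m$ matrix) and $P_V$ for the orthogonal projector onto the row space of $M$ (an $n\times n$ matrix), so that $M = P_U M = M P_V$. First I would define
\[
N_2 = (I_m - P_U)\, N\, (I_n - P_V), \qquad N_1 = N - N_2,
\]
which gives property (1) immediately.

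Next I would verify (3). Since $(I_m - P_U)$ annihilates the column space of $M$, we have $M^{\top}(I_m - P_U) = \mathbf{0}$, hence
\[
M N_2^{\top} = M\,(I_n - P_V)\,N^{\top}\,(I_m - P_U) = \mathbf{0}_{m,m},
\]
using $M(I_n - P_V) = M - M P_V = \mathbf{0}$; similarly $M^{\top} N_2 = (I_n - P_V) N^{\top} (I_m - P_U) M = \mathbf{0}_{n,n}$ because $(I_m - P_U)M = \mathbf{0}$. For (4), the identity $\langle N_1, N_2\rangle = 0$: expand $N_1 = P_U N + (I_m - P_U) N P_V$ and note that each term, when paired in the trace inner product with $N_2 = (I_m-P_U)N(I_n-P_V)$, picks up a factor $P_U(I_m - P_U) = \mathbf{0}$ or $(I_n - P_V) P_V = \mathbf{0}$ after cycling the trace; so the inner product vanishes term by term.

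Finally, (2) is the rank bound. Writing $N_1 = P_U N + (I_m - P_U) N P_V$, the first summand has rank at most $\mathrm{rank}(P_U) = r$, and the second has rank at most $\mathrm{rank}(P_V) = r$, so $\mathrm{rank}(N_1) \le 2r = 2\,\mathrm{rank}(M)$ by subadditivity of rank. I expect the only point that needs care is bookkeeping the projector identities $P_U(I_m - P_U) = \mathbf{0}$, $(I_n - P_V)P_V = \mathbf{0}$, $M(I_n - P_V) = \mathbf{0}$, $(I_m - P_U)M = \mathbf{0}$ and applying them consistently inside the trace; none of this is deep, but the decomposition $N_1 = P_U N + (I_m - P_U)NP_V$ (rather than the naive $N_1 = P_U N + P_U^{\perp} N - P_U^{\perp} N P_V^{\perp}$ written without simplification) is the key observation that makes both the rank bound and the orthogonality transparent.
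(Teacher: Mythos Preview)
The paper does not supply its own proof of this lemma; it simply cites Recht, Fazel and Parrilo \cite{recht5}. Your construction via the projectors $P_U$ and $P_V$ onto the column and row spaces of $M$ is exactly the standard argument from that reference, and all four properties follow as you indicate.

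One small slip to fix in a final write-up: in checking $M^{\top}N_2=\mathbf{0}$ you wrote $M^{\top} N_2 = (I_n - P_V) N^{\top} (I_m - P_U) M$, but the right-hand side is actually $N_2^{\top}M$, not $M^{\top}N_2$. The correct line is $M^{\top}N_2 = M^{\top}(I_m-P_U)N(I_n-P_V)=\mathbf{0}$, which vanishes because $(I_m-P_U)M=\mathbf{0}$ implies $M^{\top}(I_m-P_U)=\mathbf{0}$. The conclusion and the underlying reason are unchanged.
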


\begin{lemma}\label{lem2}
Let $M, N\in \mathbb{R}^{m\times n}$. If $MN^{\top}=\mathbf{0}_{m, m}$ and $M^{\top}N=\mathbf{0}_{n, n}$, then
\begin{equation}\label{equ10}
TL_{\alpha}^{\epsilon}(M+N)=TL_{\alpha}^{\epsilon}(M)+TL_{\alpha}^{\epsilon}(N).
\end{equation}
\end{lemma}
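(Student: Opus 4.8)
The plan is to reduce the statement to a fact about singular values. I will show that the two hypotheses force $\mathrm{range}(M)\perp\mathrm{range}(N)$ inside $\mathbb{R}^{m}$ and, simultaneously, $\mathrm{range}(M^{\top})\perp\mathrm{range}(N^{\top})$ inside $\mathbb{R}^{n}$; from this one reads off that the multiset of singular values of $M+N$ is precisely the multiset of singular values of $M$ together with that of $N$ (up to padding with zeros and reordering). Since $TL_{\alpha}^{\epsilon}(X)=\sum_{i=1}^{m}\varphi_{\alpha}^{\epsilon}(\sigma_{i}(X))$ depends only on the multiset of singular values, and since substituting $t=0$ into (\ref{equ6}) gives $\varphi_{\alpha}^{\epsilon}(0)=0$ for every $0\le\alpha<1$ and $\epsilon>0$, the additivity (\ref{equ10}) then follows at once.

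For the geometric step, observe that the $(i,j)$ entry of $M^{\top}N$ is the inner product of the $i$-th column of $M$ with the $j$-th column of $N$, so the hypothesis $M^{\top}N=\mathbf{0}_{n,n}$ says exactly that every column of $M$ is orthogonal to every column of $N$, i.e. $\mathrm{range}(M)\perp\mathrm{range}(N)$. Symmetrically, the $(i,j)$ entry of $MN^{\top}$ is the inner product of the $i$-th row of $M$ with the $j$-th row of $N$, so $MN^{\top}=\mathbf{0}_{m,m}$ gives $\mathrm{range}(M^{\top})\perp\mathrm{range}(N^{\top})$. Writing $r=\mathrm{rank}(M)$ and $s=\mathrm{rank}(N)$, the first of these forces $r+s\le m\le n$, which is exactly the dimension bound needed below.

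Next I would assemble an SVD of $M+N$ from those of $M$ and $N$. Take compact decompositions $M=U_{1}\Sigma_{1}V_{1}^{\top}$ and $N=U_{2}\Sigma_{2}V_{2}^{\top}$, where the columns of $U_{1},U_{2}$ are orthonormal bases of $\mathrm{range}(M),\mathrm{range}(N)$, the columns of $V_{1},V_{2}$ are orthonormal bases of $\mathrm{range}(M^{\top}),\mathrm{range}(N^{\top})$, and $\Sigma_{1},\Sigma_{2}$ are diagonal with the positive singular values of $M,N$. By the orthogonality established above, the block matrix $[U_{1},U_{2}]$ has orthonormal columns, and so does $[V_{1},V_{2}]$; and one checks directly that $M+N=[U_{1},U_{2}]\,\mathrm{Diag}(\Sigma_{1},\Sigma_{2})\,[V_{1},V_{2}]^{\top}$, where $\mathrm{Diag}(\Sigma_{1},\Sigma_{2})$ is the block-diagonal matrix with blocks $\Sigma_{1},\Sigma_{2}$. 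Since $r+s\le m\le n$, the two blocks extend to $m\times m$ and $n\times n$ unitary matrices, and after permuting columns and rows so that the diagonal entries appear in nonincreasing order we obtain a genuine SVD of $M+N$. Hence $\sigma(M+N)$ consists, as a multiset, of $\sigma_{1}(M),\dots,\sigma_{r}(M)$, $\sigma_{1}(N),\dots,\sigma_{s}(N)$ and $m-r-s$ further zeros; since appended zeros and reordering do not change $\sum_{i}\varphi_{\alpha}^{\epsilon}(\sigma_{i}(\cdot))$, summing yields $TL_{\alpha}^{\epsilon}(M+N)=TL_{\alpha}^{\epsilon}(M)+TL_{\alpha}^{\epsilon}(N)$.

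I expect the only delicate point to be the SVD bookkeeping in the last paragraph: one must use both hypotheses at once (range orthogonality makes the $U$-block orthonormal, row-space orthogonality makes the $V$-block orthonormal), track the dimension bound $r+s\le m\le n$ so that the blocks genuinely embed in unitary matrices, and note that sorting the singular values is harmless. Everything else is immediate; in fact the argument uses nothing about $\varphi_{\alpha}^{\epsilon}$ beyond $\varphi_{\alpha}^{\epsilon}(0)=0$, so the very same proof gives additivity of any spectral surrogate vanishing at the origin under the same hypotheses.
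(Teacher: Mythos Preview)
Your proof is correct and follows essentially the same approach as the paper: both arguments construct an SVD of $M+N$ from SVDs of $M$ and $N$ to conclude that the singular values of $M+N$ are the union (with multiplicity) of those of $M$ and $N$, and then invoke $\varphi_{\alpha}^{\epsilon}(0)=0$. Your use of compact SVDs and the explicit range--orthogonality interpretation of the hypotheses is in fact cleaner than the paper's version, which works with full unitary factors and a $2m\times 2n$ block display; your careful tracking of the dimension bound $r+s\le m$ is exactly the bookkeeping the paper glosses over.
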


\begin{proof}
Consider the singular value decompositions of matrices $M$ and $N$:
$$M=U_{M}[\mathrm{Diag}(\sigma(M)),\mathbf{0}_{m,n-m}]V_{M}^{\top},$$
$$N=U_{N}[\mathrm{Diag}(\sigma(N)),\mathbf{0}_{m,n-m}]V_{N}^{\top}.$$
Since the unitary matrices $U_{M}, U_{N}\in \mathbb{R}^{m\times m}$ are invertible, the condition $MN^{\top}=\mathbf{0}_{m, n}$ implies that $V_{M}^{\top}V_{N}=\mathbf{0}_{n, n}$.
Similarly, $M^{\top}N=\mathbf{0}_{n, n}$ implies that $U_{M }^{\top}U_{N}=\mathbf{0}_{m, m}$. Thus, the following is a valid SVD for $M+N$,
\begin{eqnarray*}
&&M+N\\
&&=\left[
      \begin{array}{cc}
        U_{M} & U_{N} \\
      \end{array}
    \right]\cdot\\
&&\left[
  \begin{array}{cccc}
    \mathrm{Diag}(\sigma(M)) & \mathbf{0}_{m,n-m} & \mathbf{0}_{m,m} & \mathbf{0}_{m,n-m} \\
    \mathbf{0}_{m, m} & \mathbf{0}_{m,n-m} & \mathrm{Diag}(\sigma(N)) & \mathbf{0}_{m,n-m} \\
  \end{array}
\right]\cdot\\
&&\left[
  \begin{array}{cc}
    V_{M} & V_{N} \\
  \end{array}
\right]^{\top},
\end{eqnarray*}
which implies that the singular values of $M+N$ are equal to the union (with repetition) of the singular values of $M$ and $N$.
Hence, we get the equation (\ref{equ10}).
\end{proof}

\begin{lemma}\label{lem3}
Let $X=U_{X}\mathrm{Diag}(\sigma(X))V_{X}^{\top}$ be the singular value decomposition of matrix $X\in \mathbb{R}^{m\times n}$, and $\mathrm{rank}(X)=r$. For any $\alpha\in[0,1/2]$ and
$\epsilon\in(0,1/3]$, then there exists
\begin{equation}\label{equ11}
\eta_{1}=3r\sigma_{1}(X)
\end{equation}
such that, for any $\eta\geq \eta_{1}$,
\begin{equation}\label{equ12}
\sum_{i=1}^{m}\frac{\sigma_{i}(\eta^{-1}X)}{(\sigma_{i}(\eta^{-1}X)+\epsilon)^{1-2\alpha}}\leq \frac{1}{3\epsilon^{1-2\alpha}}.
\end{equation}
\end{lemma}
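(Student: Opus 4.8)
The plan is to reduce the matrix inequality (\ref{equ12}) to a scalar estimate on each singular value by exploiting monotonicity of the summand. First I would set $t_i=\sigma_i(\eta^{-1}X)=\sigma_i(X)/\eta$ and observe that since $\eta\geq\eta_1=3r\sigma_1(X)$ we have $t_i\leq \sigma_1(X)/\eta \leq 1/(3r)\leq 1/3$ for every $i$, and in fact $\sum_{i=1}^r t_i \leq r\cdot\sigma_1(X)/\eta \leq 1/3$ (the zero singular values contribute nothing). So the whole problem lives in the regime where each $t_i$ is small.

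The key step is the scalar bound: for $g(t)=t/(t+\epsilon)^{1-2\alpha}$ with $0\le\alpha\le 1/2$ and $0<\epsilon\le 1/3$, I claim $g(t)\le t/\epsilon^{1-2\alpha}$ whenever $t\geq 0$. This is immediate because $1-2\alpha\geq 0$ forces $(t+\epsilon)^{1-2\alpha}\geq\epsilon^{1-2\alpha}$. Summing over $i$ gives
\begin{equation*}
\sum_{i=1}^{m}\frac{t_i}{(t_i+\epsilon)^{1-2\alpha}}\ \le\ \frac{1}{\epsilon^{1-2\alpha}}\sum_{i=1}^{m}t_i\ \le\ \frac{1}{\epsilon^{1-2\alpha}}\cdot\frac{1}{3}\ =\ \frac{1}{3\epsilon^{1-2\alpha}},
\end{equation*}
which is exactly (\ref{equ12}). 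The bound $\sum_i t_i\le 1/3$ used here follows from $\mathrm{rank}(X)=r$, so at most $r$ terms are nonzero, each at most $\sigma_1(X)/\eta\le 1/(3r)$.

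I expect no serious obstacle here; the statement is essentially a normalization observation. The only points requiring care are: (i) checking the exponent $1-2\alpha$ is nonnegative on the stated range $\alpha\in[0,1/2]$, which is what makes $(t+\epsilon)^{1-2\alpha}$ increasing in $t$ and bounded below by $\epsilon^{1-2\alpha}$; and (ii) keeping track that only $r$ of the $\sigma_i(X)$ are nonzero so that the factor $r$ in $\eta_1=3r\sigma_1(X)$ is exactly what is needed to push $\sum_i\sigma_i(\eta^{-1}X)$ below $1/3$. The hypothesis $\epsilon\le 1/3$ is not actually needed for this particular inequality (it will presumably be used elsewhere), so I would simply not invoke it, or remark that it is superfluous for (\ref{equ12}).
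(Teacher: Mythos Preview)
Your proposal is correct and follows essentially the same route as the paper: both arguments reduce to the bound $\sum_i g(t_i)\le r\sigma_1(X)/(\eta\,\epsilon^{1-2\alpha})$ and then impose $\eta\ge 3r\sigma_1(X)$. The only cosmetic difference is that the paper first invokes monotonicity of $g(t)=t/(t+\epsilon)^{1-2\alpha}$ to bound the sum by $r\,g(t_1)$ and then drops the $t_1$ in the denominator, whereas you apply the simpler denominator bound $(t+\epsilon)^{1-2\alpha}\ge\epsilon^{1-2\alpha}$ termwise before summing; your observation that $\epsilon\le 1/3$ is not used here is also accurate.
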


\begin{proof}
Since the function $t/(t+\epsilon)^{1-2\alpha}$ is increasing in $t\in[0,+\infty)$, we have
\begin{equation}\label{equ13}
\begin{array}{llll}
&&\displaystyle\sum_{i=1}^{m}\frac{\sigma_{i}(\eta^{-1}X)}{(\sigma_{i}(\eta^{-1}X)+\epsilon)^{1-2\alpha}}\\
&&\leq\displaystyle\frac{r\sigma_{1}(\eta^{-1}X)}{(\sigma_{1}(\eta^{-1}X)+\epsilon)^{1-2\alpha}}\\
&&\leq\displaystyle \frac{r\sigma_{1}(X)}{\eta\epsilon^{1-2\alpha}}.
\end{array}
\end{equation}
In order to get equation (\ref{equ12}), it suffices to impose
\begin{equation}\label{equ14}
\frac{r\sigma_{1}(X)}{\eta\epsilon^{1-2\alpha}}\leq \frac{1}{3\epsilon^{1-2\alpha}},
\end{equation}
equivalently,
$$\eta \geq 3r\sigma_{1}(X).$$
This completes the proof.
\end{proof}

\begin{lemma}\label{lem4}
(\cite{xu25}) For any fixed $\lambda>0$ and $y_{i}\in \mathbb{R}$, let
\begin{equation}\label{equ15}
h_{\lambda}(y_{i}):=\arg\min_{x_{i}\geq 0}\Big\{(x_{i}-y_{i})^{2}+\lambda x_{i}^{1/2}\Big\},
\end{equation}
then the half thresholding function $h_{\lambda}$ can be analytically expressed by
\begin{equation}\label{equ16}
h_{\lambda}(y_{i})=\left\{
    \begin{array}{ll}
      h_{\lambda,1/2}(y_{i}), & \ \mathrm{if} \ {y_{i}> \frac{\sqrt[3]{54}}{4}\lambda^{2/3};} \\
      0, & \ \mathrm{if} \ {y_{i}\leq\frac{\sqrt[3]{54}}{4}\lambda^{2/3},}
    \end{array}
  \right.
\end{equation}
where
\begin{equation}\label{equ17}
h_{\lambda,1/2}(y_{i})=\frac{2}{3}t\Big(1+\cos\Big(\frac{2\pi}{3}-\frac{2}{3}\phi_{\lambda}(y_{i})\Big)\Big)
\end{equation}
with
\begin{equation}\label{equ18}
\phi_{\lambda}(y_{i})=\arccos\Big(\frac{\lambda}{8}\Big(\frac{|y_{i}|}{3}\Big)^{-3/2}\Big).
\end{equation}
\end{lemma}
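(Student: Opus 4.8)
The plan is to treat (\ref{equ15}) as the scalar minimization of $f(x_i):=(x_i-y_i)^2+\lambda x_i^{1/2}$ over $[0,+\infty)$: solve the first-order optimality condition in closed form and then compare the resulting interior critical value with the boundary value $f(0)=y_i^{2}$. First, if $y_i\le 0$ then both $(x_i-y_i)^2$ and $\lambda x_i^{1/2}$ are nondecreasing on $[0,+\infty)$, so the minimizer is $x_i=0$, which is consistent with (\ref{equ16}) because the stated threshold is positive; hence I may assume $y_i>0$. For $x_i>0$ the objective is smooth, with $f'(x_i)=2(x_i-y_i)+\frac{\lambda}{2}x_i^{-1/2}$; since $f$ is continuous and coercive on $[0,+\infty)$ its minimum is attained, at $x_i=0$ or at an interior zero of $f'$. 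Multiplying $f'(x_i)=0$ by $\frac12 x_i^{1/2}$ and substituting $u=x_i^{1/2}\ge 0$ converts the stationarity equation into the depressed cubic $u^{3}-y_iu+\frac{\lambda}{4}=0$. I would then run the elementary sign analysis of $g(u):=u^{3}-y_iu+\frac{\lambda}{4}$: since $g(0)=\frac{\lambda}{4}>0$ and $g$ attains its unique positive local minimum at $u_\ast=\sqrt{y_i/3}$, the cubic has two positive roots $u_1\le u_2$ precisely when $g(u_\ast)\le 0$, i.e. when $y_i\ge \frac{3}{4}\lambda^{2/3}$. Because $f'(x_i)$ and $g(x_i^{1/2})$ share the same sign for $x_i>0$, in that regime $f$ increases on $(0,u_1^{2})$, decreases on $(u_1^{2},u_2^{2})$, and increases on $(u_2^{2},+\infty)$, so the only candidates for the global minimizer on $[0,+\infty)$ are $x_i=0$ and the larger root $x_i=u_2^{2}$.

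Next I would solve the cubic explicitly. Because here $p=-y_i<0$ and we are in the three-real-roots case, the trigonometric substitution $u=2\sqrt{y_i/3}\,\cos\theta$ reduces $u^{3}-y_iu+\frac{\lambda}{4}=0$ to $\cos 3\theta=-\frac{\lambda}{8}\big(\frac{y_i}{3}\big)^{-3/2}=-\cos\phi_\lambda(y_i)$ with $\phi_\lambda(y_i)$ as in (\ref{equ18}); its three solutions are $\theta=\frac{\pi-\phi_\lambda(y_i)}{3}$, $\theta=\pi-\frac{\phi_\lambda(y_i)}{3}$, and $\theta=-\frac{\pi+\phi_\lambda(y_i)}{3}$. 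A short estimate of the range of $\phi_\lambda(y_i)$ on the interval where the formula is used (there $\phi_\lambda(y_i)\in(\pi/4,\pi/2)$) shows that the second angle gives a negative root, the third the smaller nonnegative root, and the first the larger nonnegative root, $u_2=2\sqrt{y_i/3}\,\cos\!\big(\frac{\pi-\phi_\lambda(y_i)}{3}\big)$. Squaring and applying $\cos^{2}v=\frac12(1+\cos 2v)$ then yields $u_2^{2}=\frac{2}{3}y_i\big(1+\cos(\frac{2\pi}{3}-\frac{2}{3}\phi_\lambda(y_i))\big)$, which is exactly the expression $h_{\lambda,1/2}(y_i)$ in (\ref{equ17}).

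Finally I would identify the threshold by comparing $f(0)=y_i^{2}$ with $f(u_2^{2})$. Rather than insert the trigonometric form of $u_2$, I would use the root identity $\lambda=4u_2(y_i-u_2^{2})$ to eliminate $\lambda$ from $f(u_2^{2})=(u_2^{2}-y_i)^{2}+\lambda u_2$; writing $w=u_2^{2}-y_i$, the equation $f(u_2^{2})=f(0)$ collapses to $3w^{2}+4y_iw+y_i^{2}=0$, whose only admissible solution is $w=-\frac{y_i}{3}$, i.e. $u_2^{2}=\frac{2y_i}{3}$, and back-substitution gives $\lambda=\frac{4}{3}\sqrt{2/3}\,y_i^{3/2}$, equivalently $y_i=\big(\frac{27}{32}\big)^{1/3}\lambda^{2/3}=\frac{\sqrt[3]{54}}{4}\lambda^{2/3}$ (using $54\cdot 32=12^{3}$). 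An envelope computation, $\frac{d}{dy_i}\big(f(u_2^{2})-y_i^{2}\big)=2(y_i-u_2^{2})-2y_i=-2u_2^{2}<0$, shows $f(u_2^{2})-f(0)$ is strictly decreasing in $y_i$ on the relevant interval, so $f(u_2^{2})<f(0)$ iff $y_i>\frac{\sqrt[3]{54}}{4}\lambda^{2/3}$; for $y_i$ below this value the minimizer is $x_i=0$ (because no positive critical point exists when $y_i<\frac{3}{4}\lambda^{2/3}$, and the positive local minimum is not global when $\frac{3}{4}\lambda^{2/3}\le y_i\le\frac{\sqrt[3]{54}}{4}\lambda^{2/3}$). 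Collecting the three regimes gives (\ref{equ16}). I expect the main obstacle to be bookkeeping rather than conceptual: choosing, among the three roots produced by the trigonometric formula, the one that is simultaneously nonnegative and a \emph{global} (not merely local) minimizer, and pushing the threshold algebra through so that the constant $\frac{\sqrt[3]{54}}{4}$ appears cleanly — the $\lambda$-elimination step is what keeps the latter manageable.
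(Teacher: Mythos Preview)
Your argument is correct and self-contained. Note, however, that in the paper this lemma is not proved at all: it is quoted verbatim from \cite{xu25} (the Xu--Chang--Xu--Zhang $L_{1/2}$ thresholding paper), so there is no ``paper's own proof'' to compare against. Your derivation --- reducing the stationarity condition to the depressed cubic $u^{3}-y_i u+\tfrac{\lambda}{4}=0$, solving it via the trigonometric form, and then locating the threshold by the $\lambda$-elimination trick $\lambda=4u_2(y_i-u_2^{2})$ together with the envelope monotonicity --- is essentially the same route taken in the original reference, and every step checks out (in particular your identification $\phi_\lambda(y_i)\in(\pi/4,\pi/2)$ on the super-threshold range, which pins down the correct root, is accurate: at the threshold the arccos argument equals $\sqrt{2}/2$). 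One cosmetic remark: equation~(\ref{equ17}) in the paper has a typo (the prefactor is written as $t$ rather than $y_i$); your computation of $u_2^{2}=\tfrac{2}{3}y_i\bigl(1+\cos(\tfrac{2\pi}{3}-\tfrac{2}{3}\phi_\lambda(y_i))\bigr)$ is the intended formula.
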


\begin{definition}\label{def1}
(\cite{xu25}) For any $\lambda>0$ and $y=(y_{1}, y_{2}, \cdots, y_{m})^{\top}\in \mathbb{R}^{m}$, the vector half thresholding
operator $H_{\lambda}$ is defined as
\begin{equation}\label{equ19}
H_{\lambda}(y)=(h_{\lambda}(y_{1}), h_{\lambda}(y_{2}), \cdots, h_{\lambda}(y_{m}))^{\top},
\end{equation}
where $h_{\lambda}$ is defined in Lemma \ref{lem3}.
\end{definition}

\begin{lemma}\label{lem5}
(\cite{xu25}) For any $y_{i}>\frac{\sqrt[3]{54}}{4}\lambda^{2/3}$, the half thresholding function $h_{\lambda}(y_{i})$ defined in (\ref{equ15}) is strict increasing.
\end{lemma}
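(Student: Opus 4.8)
The plan is to bypass the trigonometric formula (\ref{equ17})--(\ref{equ18}) and work instead with the variational definition (\ref{equ15}). For $y_{i}>\frac{\sqrt[3]{54}}{4}\lambda^{2/3}$, by Lemma~\ref{lem4} the value $h_{\lambda}(y_{i})=h_{\lambda,1/2}(y_{i})$ is exactly the unique positive global minimizer $x^{*}=x^{*}(y_{i})$ of $f_{y_{i}}(x):=(x-y_{i})^{2}+\lambda x^{1/2}$ over $x\geq 0$. Being an interior minimizer, $x^{*}$ satisfies the stationarity condition $f_{y_{i}}'(x^{*})=2(x^{*}-y_{i})+\frac{\lambda}{2}(x^{*})^{-1/2}=0$, which I would rewrite as $y_{i}=\psi(x^{*})$ with $\psi(x):=x+\frac{\lambda}{4}x^{-1/2}$ for $x>0$. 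In this way $x^{*}(\cdot)$ is realized as an inverse branch of $\psi$, and strict monotonicity of $h_{\lambda}$ reduces to showing that the relevant branch lies in the region where $\psi$ is strictly increasing.

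First I would study $\psi$ on $(0,+\infty)$. It is smooth with $\psi'(x)=1-\frac{\lambda}{8}x^{-3/2}$, so $\psi$ is strictly decreasing on $(0,x_{0})$ and strictly increasing on $(x_{0},+\infty)$, where $x_{0}:=(\lambda/8)^{2/3}=\frac{\lambda^{2/3}}{4}$, and it attains its global minimum $\psi(x_{0})=\frac{3}{4}\lambda^{2/3}$. The key step is to show that the minimizer always falls on the increasing branch, that is, $x^{*}>x_{0}$. This follows from the second-order condition: since $x^{*}$ is a local minimizer of $f_{y_{i}}$, $f_{y_{i}}''(x^{*})=2-\frac{\lambda}{4}(x^{*})^{-3/2}\geq 0$, hence $x^{*}\geq x_{0}$; and equality would force $y_{i}=\psi(x_{0})=\frac{3}{4}\lambda^{2/3}$, which is impossible because $\frac{\sqrt[3]{54}}{4}>\frac{3}{4}$ (as $54>27$) while $y_{i}>\frac{\sqrt[3]{54}}{4}\lambda^{2/3}$. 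Therefore $x^{*}>x_{0}$.

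Finally I would conclude. On $(x_{0},+\infty)$ the map $\psi$ is a strictly increasing bijection onto $(\frac{3}{4}\lambda^{2/3},+\infty)$, so its inverse there is strictly increasing; since every $y_{i}>\frac{\sqrt[3]{54}}{4}\lambda^{2/3}$ satisfies $h_{\lambda}(y_{i})=x^{*}(y_{i})\in(x_{0},+\infty)$ with $y_{i}=\psi(x^{*}(y_{i}))$, the function $y_{i}\mapsto h_{\lambda}(y_{i})$ coincides with this increasing inverse branch and is strictly increasing. Equivalently, by the inverse function theorem, $\frac{d}{dy_{i}}h_{\lambda}(y_{i})=1/\psi'(x^{*})>0$ on this range.

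The step I expect to be the main obstacle is establishing $x^{*}>x_{0}$: one has to pin down which stationary point of $f_{y_{i}}$ is the minimizer and tie the threshold $\frac{\sqrt[3]{54}}{4}\lambda^{2/3}$ (beyond which the positive stationary point beats $x=0$) to the turning point $x_{0}$ of $\psi$; once this is done the rest is routine. A purely computational alternative would be to differentiate (\ref{equ17}) directly, using $\phi_{\lambda}'(y_{i})>0$ obtained from (\ref{equ18}) and bounding $\cos$ and $\sin$ of $\frac{2\pi}{3}-\frac{2}{3}\phi_{\lambda}(y_{i})$; this also works but is noticeably more tedious, so I would keep the variational argument as the main line and resort to the explicit computation only if a derivation self-contained from (\ref{equ17})--(\ref{equ18}) is preferred.
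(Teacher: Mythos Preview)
Your argument is correct. The paper does not actually prove Lemma~\ref{lem5}; it simply quotes the result from \cite{xu25} without any argument, so there is no ``paper's own proof'' to compare against. Your variational approach---rewriting the first-order condition as $y_{i}=\psi(x^{*})$ with $\psi(x)=x+\tfrac{\lambda}{4}x^{-1/2}$, using the second-order condition $f_{y_{i}}''(x^{*})\geq 0$ to force $x^{*}\geq x_{0}=(\lambda/8)^{2/3}$, ruling out equality via $\tfrac{\sqrt[3]{54}}{4}>\tfrac{3}{4}$, and then reading off strict monotonicity from the increasing branch of $\psi$---is clean and self-contained, and it avoids the trigonometric bookkeeping that a direct differentiation of (\ref{equ17})--(\ref{equ18}) would require. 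In effect you are supplying a proof the paper chose to outsource; nothing needs to be changed.
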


\begin{definition}\label{def2}
Suppose matrix $Y\in \mathbb{R}^{m\times n}$ admits a singular value decomposition as $Y=U_{Y}[\mathrm{Diag}(\sigma(Y)),\mathbf{0}_{m,n-m}]V_{Y}^{\top}$.
For any $\lambda>0$, the matrix half thresholding operator $\mathcal{H}_{\lambda}: \mathbb{R}^{m\times n}\rightarrow \mathbb{R}^{m\times n}$ is defined by
\begin{equation}\label{equ20}
\mathcal{H}_{\lambda}(Y)=U_{Y}[\mathrm{Diag}(H_{\lambda}(\sigma(Y))),\mathbf{0}_{m,n-m}]V_{Y}^{\top},
\end{equation}
where $H_{\lambda}$ is defined in Definition \ref{def1}.
\end{definition}

The  matrix half thresholding operator $\mathcal{H}_{\lambda}$ simply applies the vector half thresholding operator $H_{\lambda}$ defined in Definition \ref{def1} to the singular
value vector of a matrix, and effectively shrinks the singular values towards zero. If there are some nonzero singular values of matrix $Y$ are below the threshold value $\frac{\sqrt[3]{54}}{4}\lambda^{2/3}$, we can immediately get that the rank of $\mathcal{H}_{\lambda}(Y)$ lower than the rank of matrix $Y$.

Combing Lemma \ref{lem5} and Definition \ref{def2}, we can get the following crucial Lemma.
\begin{lemma}\label{lem6}
Let $Y=U_{Y}[\mathrm{Diag}(\sigma_{i}(Y)),\mathbf{0}_{m,n-m}]V_{Y}^{\top}$ be the singular value decomposition of matrix $Y\in \mathbb{R}^{m\times n}$ and $\mathcal{H}_{\lambda}(Y)=U_{Y}[\mathrm{Diag}(H_{\lambda}(\sigma(Y))),\mathbf{0}_{m,n-m}]V_{Y}^{\top}$.
Then
\begin{equation}\label{equ21}
\mathcal{H}_{\lambda}(Y)=\arg\min_{X\in \mathbb{R}^{m\times n}}\Big\{\|X-Y\|_{F}^{2}+\lambda \|X\|^{1/2}_{1/2}\Big\}.
\end{equation}
\end{lemma}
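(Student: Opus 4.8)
The plan is to reduce the matrix optimization problem in (\ref{equ21}) to a separable scalar problem over the singular values, exploiting the fact that both terms of the objective depend on $X$ only through its singular values and singular vectors: indeed $\|X\|_{1/2}^{1/2}=\sum_{i=1}^{m}\sigma_i(X)^{1/2}$ is unitarily invariant, and expanding the Frobenius term gives
\begin{equation*}
\|X-Y\|_F^2=\|X\|_F^2-2\langle X,Y\rangle+\|Y\|_F^2=\sum_{i=1}^{m}\sigma_i(X)^2-2\langle X,Y\rangle+\sum_{i=1}^{m}\sigma_i(Y)^2 ,
\end{equation*}
so that the objective in (\ref{equ21}) equals $\sum_{i=1}^{m}\big(\sigma_i(X)^2+\lambda\sigma_i(X)^{1/2}\big)-2\langle X,Y\rangle+\sum_{i=1}^{m}\sigma_i(Y)^2$.

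Next I would invoke von Neumann's trace inequality, $\langle X,Y\rangle\le\sum_{i=1}^{m}\sigma_i(X)\sigma_i(Y)$ with the singular values of each matrix arranged in descending order, to obtain the lower bound
\begin{equation*}
\|X-Y\|_F^2+\lambda\|X\|_{1/2}^{1/2}\ \ge\ \sum_{i=1}^{m}\Big((\sigma_i(X)-\sigma_i(Y))^2+\lambda\sigma_i(X)^{1/2}\Big).
\end{equation*}
The right-hand side is now completely decoupled, and each summand is precisely the scalar functional minimized over $x_i\ge 0$ in Lemma~\ref{lem4} with $y_i=\sigma_i(Y)$; hence each summand is bounded below by its value at $x_i=h_\lambda(\sigma_i(Y))$, and consequently the whole objective is bounded below by $\sum_{i=1}^{m}\big((h_\lambda(\sigma_i(Y))-\sigma_i(Y))^2+\lambda h_\lambda(\sigma_i(Y))^{1/2}\big)$.

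Finally I would verify that this lower bound is attained at $X=\mathcal{H}_\lambda(Y)$. By construction $\mathcal{H}_\lambda(Y)$ shares the singular vector matrices $U_Y,V_Y$ of $Y$, which turns von Neumann's inequality into an equality, and its singular values are $h_\lambda(\sigma_i(Y))$, which minimize the decoupled scalar terms. The point requiring care is that $\mathcal{H}_\lambda(Y)$ must actually be a legitimate matrix, i.e. the numbers $h_\lambda(\sigma_1(Y))\ge h_\lambda(\sigma_2(Y))\ge\cdots$ must remain in descending order; this follows because $h_\lambda$ is nondecreasing (it vanishes below the threshold $\frac{\sqrt[3]{54}}{4}\lambda^{2/3}$ and is strictly increasing above it by Lemma~\ref{lem5}), so the order of the $\sigma_i(Y)$ is preserved. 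Therefore $\mathcal{H}_\lambda(Y)$ meets the lower bound and solves (\ref{equ21}).

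I expect the main obstacle to be this last, simultaneous‑optimality step: one must check that the matrix achieving equality in von Neumann's inequality is the same one that minimizes each separable scalar problem, and that this common minimizer is a well-defined matrix with correctly ordered singular values. If uniqueness of the minimizer is also desired, it would rest on the strict monotonicity of $h_\lambda$ above the threshold (Lemma~\ref{lem5}) together with the equality case of von Neumann's inequality to pin down the singular subspaces up to the usual unitary ambiguities.
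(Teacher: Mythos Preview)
Your proposal is correct. The paper does not actually give a self-contained proof of Lemma~\ref{lem6}; it simply writes ``Similar argument as used in the proof of (\cite{yu51}, Theorem~2.1)'' and defers the details to that reference. Your sketch---von Neumann's trace inequality to reduce $\|X-Y\|_F^2+\lambda\|X\|_{1/2}^{1/2}$ to the separable scalar problem $\sum_i\big((\sigma_i(X)-\sigma_i(Y))^2+\lambda\sigma_i(X)^{1/2}\big)$, Lemma~\ref{lem4} to solve each scalar problem, and Lemma~\ref{lem5} to check that the optimal $h_\lambda(\sigma_i(Y))$ remain in descending order so that $\mathcal{H}_\lambda(Y)$ is a bona fide SVD attaining equality in von Neumann---is exactly the standard route taken in \cite{yu51} and related works, and it is what the paper has in mind when it invokes Lemma~\ref{lem5} and Definition~\ref{def2} just before the statement. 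In other words, you have filled in precisely what the paper left to citation.
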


\begin{proof}
Similar argument as used in the proof of (\cite{yu51}, Theorem 2.1).
\end{proof}

\begin{definition}\label{def3}
Let $Y=U_{Y}[\mathrm{Diag}(\sigma_{i}(Y)),\mathbf{0}_{m,n-m}]V_{Y}^{\top}$ be the singular value decomposition of matrix $Y$, and $\sigma(Y): \sigma_{1}(Y)\geq\sigma_{2}(Y)\geq\cdots\geq\sigma_{r}(Y)\geq\sigma_{r+1}(Y)=\cdots=\sigma_{m}(Z)=0$  be the singular value vector of matrix $Y\in \mathbb{R}^{m\times n}$,
for any $\alpha\in(0,1)$, we define
\begin{equation}\label{equ22}
\begin{array}{llll}
&&\mathcal{H}_{\lambda/(\sigma(Z)+\epsilon)^{1/2-\alpha}}(Y)\\
&&=U_{Y}[\mathrm{Diag}(H_{\lambda/(\sigma(Z)+\epsilon)^{1/2-\alpha}}(\sigma(Y))),\mathbf{0}_{n,n-m}]V_{Y}^{\top},
\end{array}
\end{equation}
where
\begin{equation}\label{equ23}
\begin{array}{llll}
&&H_{\lambda/(\sigma(Z)+\epsilon)^{1/2-\alpha}}(\sigma(Y))\\
&&=(h_{\lambda/(\sigma_{1}(Z)+\epsilon)^{1/2-\alpha}}(\sigma_{1}(Y)), h_{\lambda/(\sigma_{2}(Z)+\epsilon)^{1/2-\alpha}}(\sigma_{2}(Y)),\\
&&\ \ \ \ \cdots, h_{\lambda/(\sigma_{m}(Z)+\epsilon)^{1/2-\alpha}}(\sigma_{m}(Y)))^{\top}
\end{array}
\end{equation}
is defined in Definition \ref{def1}, and $h_{\lambda/(\sigma_{i}(Z)+\epsilon)^{1/2-\alpha}}$ is obtained by replacing $\lambda$ with $\lambda/(\sigma_{i}(Y)+\epsilon)^{1/2-\alpha}$ in $h_{\lambda}$.
\end{definition}


\section{Equivalence between minimization problems (AMRM) and (TLAMRM)}\label{section3}

In this section, we shall establish the equivalence between minimization problems (AMRM) and (TLAMRM). We demonstrate that the minimizer of the problem (TLAMRM)
also solves the problem (AMRM) if some specific conditions are satisfied.

\begin{definition}\label{de4} {\rm(\cite{gol23})}
For every integer $r$ with $1\leq r\leq m$, the linear operator $\mathcal{A}: \mathbb{R}^{m\times n}\mapsto \mathbb{R}^{d}$ is said to satisfy the Restricted Isometry Property (RIP)
with the restricted isometry constant $\delta_{r}(\mathcal{A})$ if $\delta_{r}(\mathcal{A})$ is the minimum constant that satisfies
\begin{equation}\label{equ24}
(1-\delta_{r}(\mathcal{A}))\|X\|_{F}^{2}\leq\|\mathcal{A}(X)\|_{2}^{2}\leq(1+\delta_{r}(\mathcal{A}))\|X\|_{F}^{2}
\end{equation}
for all $X\in \mathbb{R}^{m\times n}$ with $\mathrm{rank}(X)\leq r$, and $\delta_{r}(\mathcal{A})$ is called the RIP constant. Note that $\delta_{s}(\mathcal{A})\leq \delta_{t}(\mathcal{A})$,
if $s\leq t$.
\end{definition}

The RIP concept and the RIP constant $\delta_{r}(\mathcal{A})$ play a key role in the relationship between the NP-hard original problem (AMRM) and its convex relaxation problem (NAMRM).
Especially, Recht et al. \cite{recht5} have proved that the problem (AMRM) and (NAMRM) have the same optimal solution if the RIP constant satisfies $\delta_{5r}(\mathcal{A})<0.1$.
A very natural idea appears in our mind: can we also have the same solution to the problems (AMRM) and (TLAMRM)? In the following theorem, we will give answer to this question.

\begin{theorem}\label{the1}
Let $X^{\ast}$ and $X_{0}$ be the minimizers to the problem (TLAMRM) and (AMRM) respectively. For any $\alpha\in[0,1/2]$ and $\epsilon\in(0,1/3]$, if there is a number $k>2\beta$, such that
\begin{equation}\label{equ25}
\epsilon^{1-2\alpha}(2\beta)^{-3/2}\sqrt{1-\delta_{2\beta+k}(\mathcal{A})}-\sqrt{\frac{1+\delta_{k}(\mathcal{A})}{k}}>0,
\end{equation}
then the unique minimizer $X^{\ast}$ of the problem (TLAMRM) is exactly $X_{0}$, where $\beta=\mathrm{rank}(X_{0})$.
\end{theorem}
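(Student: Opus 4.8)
The plan is to argue by contradiction, reducing the equivalence to a null‑space type inequality for $TL_{\alpha}^{\epsilon}$ that is then closed off by the RIP. Suppose $X^{\ast}\neq X_{0}$ and put $Y=X^{\ast}-X_{0}$; since $\mathcal{A}(X^{\ast})=\mathcal{A}(X_{0})=b$ we have $Y\neq\mathbf{0}$ and $\mathcal{A}(Y)=\mathbf{0}$. Applying Lemma \ref{lem1} with $M=X_{0}$ and $N=Y$ splits $Y=Y_{1}+Y_{2}$ with $\mathrm{rank}(Y_{1})\leq 2\beta$, $X_{0}Y_{2}^{\top}=\mathbf{0}$, $X_{0}^{\top}Y_{2}=\mathbf{0}$ and $\langle Y_{1},Y_{2}\rangle=0$. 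The first goal is to turn the optimality of $X^{\ast}$ into the one‑sided bound $TL_{\alpha}^{\epsilon}(Y_{2})\leq TL_{\alpha}^{\epsilon}(Y_{1})$. By Lemma \ref{lem2}, $TL_{\alpha}^{\epsilon}(X_{0}+Y_{2})=TL_{\alpha}^{\epsilon}(X_{0})+TL_{\alpha}^{\epsilon}(Y_{2})$; and since $\varphi_{\alpha}^{\epsilon}$ is concave on $[0,\infty)$ with $\varphi_{\alpha}^{\epsilon}(0)=0$, the functional $TL_{\alpha}^{\epsilon}(\cdot)=\sum_{i}\varphi_{\alpha}^{\epsilon}(\sigma_{i}(\cdot))$ is subadditive, so writing $X^{\ast}=(X_{0}+Y_{2})+Y_{1}$ gives $TL_{\alpha}^{\epsilon}(X^{\ast})\geq TL_{\alpha}^{\epsilon}(X_{0}+Y_{2})-TL_{\alpha}^{\epsilon}(Y_{1})=TL_{\alpha}^{\epsilon}(X_{0})+TL_{\alpha}^{\epsilon}(Y_{2})-TL_{\alpha}^{\epsilon}(Y_{1})$. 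Combining with $TL_{\alpha}^{\epsilon}(X^{\ast})\leq TL_{\alpha}^{\epsilon}(X_{0})$ (minimality of $X^{\ast}$) yields $TL_{\alpha}^{\epsilon}(Y_{2})\leq TL_{\alpha}^{\epsilon}(Y_{1})$.

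Next I would estimate the two sides in opposite directions. On the low‑rank side, $\varphi_{\alpha}^{\epsilon}(t)=t^{1/2}(t+\epsilon)^{\alpha-1/2}\leq t^{1/2}\epsilon^{\alpha-1/2}$ (valid because $\alpha\leq 1/2$), and H\"older's inequality gives $\sum_{i=1}^{2\beta}\sigma_{i}(Y_{1})^{1/2}\leq(2\beta)^{3/4}\|Y_{1}\|_{F}^{1/2}$, hence
\[
TL_{\alpha}^{\epsilon}(Y_{1})\leq\frac{(2\beta)^{3/4}}{\epsilon^{1/2-\alpha}}\,\|Y_{1}\|_{F}^{1/2},\qquad\mbox{i.e.}\qquad\big(TL_{\alpha}^{\epsilon}(Y_{1})\big)^{2}\leq\frac{(2\beta)^{3/2}}{\epsilon^{1-2\alpha}}\,\|Y_{1}\|_{F},
\]
which is exactly where the factor $\epsilon^{1-2\alpha}(2\beta)^{-3/2}$ in (\ref{equ25}) originates. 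On the high‑rank side I would split $Y_{2}$ by its SVD into consecutive rank‑$k$ blocks $Y_{2}=\sum_{j\geq 1}Z_{j}$ (singular values of $Z_{j}$ dominating those of $Z_{j+1}$), use $\mathcal{A}(Y_{1})=-\mathcal{A}(Y_{2})=-\sum_{j}\mathcal{A}(Z_{j})$, and combine the RIP lower bound $(1-\delta_{2\beta+k}(\mathcal{A}))\|Y_{1}+Z_{1}\|_{F}^{2}\leq\|\mathcal{A}(Y_{1}+Z_{1})\|_{2}^{2}$ on $Y_{1}+Z_{1}$ (rank $\leq 2\beta+k$), the RIP upper bounds $\|\mathcal{A}(Z_{j})\|_{2}\leq\sqrt{1+\delta_{k}(\mathcal{A})}\,\|Z_{j}\|_{F}$, and the block‑tail estimate $\sum_{j\geq 2}\|Z_{j}\|_{F}\leq k^{-1/2}\|Y_{2}\|_{\ast}$ to obtain $\|Y_{1}\|_{F}\lesssim\sqrt{(1+\delta_{k}(\mathcal{A}))/k}\,(1-\delta_{2\beta+k}(\mathcal{A}))^{-1/2}\,\|Y_{2}\|_{\ast}$. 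To turn the residual $\|Y_{2}\|_{\ast}$ into a quantity comparable with $TL_{\alpha}^{\epsilon}(Y_{2})$---and, crucially, to neutralize the largest singular value of $Y_{2}$, which otherwise makes the $t^{1/2}$ and the $t^{\alpha}$ behaviour of $\varphi_{\alpha}^{\epsilon}$ incompatible---I would invoke Lemma \ref{lem3}: after rescaling by $\eta^{-1}$ with $\eta\geq\eta_{1}$ all singular values lie in $(0,1/3]$, where $\sum_{i}\varphi_{\alpha}^{\epsilon}(\sigma_{i})^{2}=\sum_{i}\sigma_{i}/(\sigma_{i}+\epsilon)^{1-2\alpha}$ is controlled by $1/(3\epsilon^{1-2\alpha})$, and Cauchy--Schwarz then supplies the required comparison with the correct powers of $\epsilon$ and of the rank. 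Chaining $TL_{\alpha}^{\epsilon}(Y_{2})\leq TL_{\alpha}^{\epsilon}(Y_{1})\leq\frac{(2\beta)^{3/4}}{\epsilon^{1/2-\alpha}}\|Y_{1}\|_{F}^{1/2}$ with these bounds yields an inequality whose consistency forces $\epsilon^{1-2\alpha}(2\beta)^{-3/2}\sqrt{1-\delta_{2\beta+k}(\mathcal{A})}\leq\sqrt{(1+\delta_{k}(\mathcal{A}))/k}$; by hypothesis (\ref{equ25}) this is impossible unless $Y_{2}=\mathbf{0}$. But then $\mathcal{A}(Y_{1})=-\mathcal{A}(Y_{2})=\mathbf{0}$, and since $\mathrm{rank}(Y_{1})\leq 2\beta<2\beta+k$ the RIP forces $Y_{1}=\mathbf{0}$, so $Y=\mathbf{0}$, contradicting $X^{\ast}\neq X_{0}$. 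Running the same argument for an arbitrary minimizer of (TLAMRM) shows every minimizer equals $X_{0}$, which also gives uniqueness.

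I expect the main obstacle to be precisely this bridge between the optimality inequality and the RIP. Since $TL_{\alpha}^{\epsilon}$ is not a norm---only subadditive through the concavity of $\varphi_{\alpha}^{\epsilon}$, never positively homogeneous---and since $\varphi_{\alpha}^{\epsilon}(t)\approx t^{1/2}/\epsilon^{1/2-\alpha}$ for small $t$ while $\varphi_{\alpha}^{\epsilon}(t)\approx t^{\alpha}$ for large $t$, the upper estimate of $TL_{\alpha}^{\epsilon}(Y_{1})$ and the lower estimate of $TL_{\alpha}^{\epsilon}(Y_{2})$ live in genuinely different scaling regimes; reconciling them without destroying the powers of $\epsilon$ and of the rank that appear in (\ref{equ25}) is exactly what forces the use of the rescaling of Lemma \ref{lem3} (whose threshold $\eta_{1}=3r\sigma_{1}(X)$ is chosen so that all scaled singular values fall below $1/3$), and carrying all of $2\beta$, $k$, $\epsilon$, $\delta_{2\beta+k}(\mathcal{A})$ and $\delta_{k}(\mathcal{A})$ correctly through that rescaling---together with justifying the matrix‑level subadditivity of $TL_{\alpha}^{\epsilon}$ and handling the cross terms in the RIP block decomposition---is the technically delicate part.
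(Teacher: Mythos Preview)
Your proposal is correct and follows essentially the same route as the paper: the decomposition $Y=Y_1+Y_2$ via Lemma~\ref{lem1}, the null-space inequality $TL_\alpha^\epsilon(Y_2)\le TL_\alpha^\epsilon(Y_1)$ from optimality and Lemma~\ref{lem2}, the H\"older bound $TL_\alpha^\epsilon(Y_1)\le (2\beta)^{3/4}\epsilon^{\alpha-1/2}\|Y_1\|_F^{1/2}$, the rank-$k$ block splitting of $Y_2$ combined with RIP, and the rescaling of Lemma~\ref{lem3} to force all singular values below $1/3$ so that $\sigma_i\le\sigma_i/(\sigma_i+\epsilon)^{1-2\alpha}$. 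The only organizational difference is that the paper applies the rescaling $\gamma^{-1}$ \emph{before} the block estimates (carrying $\gamma^{-1}E_j$ through (\ref{equ29})--(\ref{equ32}) and bounding $\sum_{j\ge2}\|\gamma^{-1}E_j\|_F$ directly by $k^{-1/2}(TL_\alpha^\epsilon(\gamma^{-1}E_0))^2$ rather than passing through $\|Y_2\|_\ast$), and concludes $E_0=\mathbf{0}$ first and then $E_c=\mathbf{0}$ via (\ref{equ27}); your ordering is equivalent.
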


\begin{proof}
Let $E=X^{\ast}-X_{0}$. Applying Lemma \ref{lem1} to the matrices $X_{0}$ and $E$, there exist matrices $E_{0}$ and $E_{c}$ such that
$E=E_{0}+E_{c}$, $\mathrm{rank}(E_{0})\leq 2\mathrm{rank}(X_{0})$, $X_{0}E_{c}^{\top}=\mathbf{0}_{m,m}$, $X_{0}^{\top}E_{c}=\mathbf{0}_{n.n}$ and $\langle E_{0},E_{c}\rangle=0$. Then
\begin{equation}\label{equ26}
\begin{array}{llll}
TL_{\alpha}^{\epsilon}(X_{0})&\geq&
TL_{\alpha}^{\epsilon}(X^{\ast})\\
&=& TL_{\alpha}^{\epsilon}(X_{0}+E)\\
&\geq& TL_{\alpha}^{\epsilon}(X_{0}+E_{c})-TL_{\alpha}^{\epsilon}(E_{0})\\
&=& TL_{\alpha}^{\epsilon}(X_{0})+TL_{\alpha}^{\epsilon}(E_{c})\\
&&-TL_{\alpha}^{\epsilon}(E_{0}),
\end{array}
\end{equation}
where the first inequality follows from the optimality of $X^{\ast}$, third assertion follows the triangle inequality and the last one follows Lemma \ref{lem2}. Rearranging (\ref{equ26}), we can
conclude that
\begin{equation}\label{equ27}
TL_{\alpha}^{\epsilon}(E_{0})\geq TL_{\alpha}^{\epsilon}(E_{c}).
\end{equation}

We partition $E_{c}$ into a sum of matrices $E_{1}, E_{2}, \cdots$, each of rank at most $k$. Let $E_{c}=U_{E_{c}}[\mathrm{Diag}(\sigma(E_{c})),\mathbf{0}]V_{E_{c}}^{\top}$
be the singular value decomposition of matrix $E_{c}$. For each $j\geq1$, define the index set $I_{j}=\{k(j-1)+1, \cdots, kj\}$, and let
$E_{j}=U_{E_{I_{j}}}[\mathrm{Diag}(\sigma(E_{j})),\mathbf{0}]V_{E_{I_{j}}}^{\top}$ (notice that $\langle E_{k},E_{l}\rangle=0$ if $k\neq l$). For each $\nu\in I_{j}$, by
Lemma \ref{lem3}, there exist $\gamma_{1}=3k\sigma_{1}(E_{c})$, for any $\gamma>\gamma_{1}$, $\alpha\in[0,1/2]$ and $\epsilon\in(0,1/3]$, we have
\begin{eqnarray*}
\frac{\sigma_{\nu}(\gamma^{-1}E_{j})}{(\sigma_{\nu}(\gamma^{-1}E_{j})+\epsilon)^{1-2\alpha}}&\leq&
\sum_{\nu\in I_{j}}\frac{\sigma_{\nu}(\gamma^{-1}E_{j})}{(\sigma_{\nu}(\gamma^{-1}E_{j})+\epsilon)^{1-2\alpha}}\\
&\leq& \frac{1}{3\epsilon^{1-2\alpha}}.
\end{eqnarray*}
Also since
$$\frac{\sigma_{\nu}(\gamma^{-1}E_{j})}{(\sigma_{\nu}(\gamma^{-1}E_{j})+\epsilon)^{1-2\alpha}}\leq \frac{1}{3\epsilon^{1-2\alpha}}\Leftrightarrow \sigma_{\nu}(\gamma^{-1}E_{j})\leq \xi$$
where $\xi$ is an unknown positive constant, and particularly we can choose $\xi=1/3$ for any $\alpha\in[0,1/2]$ and $\epsilon\in(0,1/3]$. Thus, we have
$$\sigma_{\nu}(\gamma^{-1}E_{j})\leq\frac{\sigma_{\nu}(\gamma^{-1}E_{j})}{(\sigma_{\nu}(\gamma^{-1}E_{j})+\epsilon)^{1-2\alpha}},\ \ \ \ \forall \nu\in I_{j}.$$
Moreover, by the construction of matrices $E_{i}$s, we can get that
\begin{equation}\label{equ28}
\sigma_{\tilde{\mu}}(\gamma^{-1}E_{j+1})\leq\frac{\sum_{v\in I_{j}}\sigma_{\nu}(\gamma^{-1}E_{j})}{k},\ \ \ \ \forall \tilde{\mu}\in I_{j+1}.
\end{equation}
It follows that
\begin{equation}\label{equ29}
\|\gamma^{-1}E_{j+1}\|_{F}\leq \frac{1}{\sqrt{k}}\sum_{\nu\in I_{j}}\frac{\sigma_{\nu}(\gamma^{-1}E_{j})}{(\sigma_{\nu}(\gamma^{-1}E_{j})+\epsilon)^{1-2\alpha}}
\end{equation}
and
\begin{equation}\label{equ30}
\begin{array}{llll}
&&\displaystyle\sum_{j\geq2}\|\gamma^{-1}E_{j+1}\|_{F}\\
&&\leq\displaystyle\sum_{j\geq1}\bigg(\frac{1}{\sqrt{k}}
\sum_{\nu\in I_{j}}\frac{(\sigma_{\nu}\gamma^{-1}E_{j})}{(\sigma_{\nu}(\gamma^{-1}E_{j})+\epsilon)^{1-2\alpha}}\bigg)\\
&&\leq\displaystyle\frac{1}{\sqrt{k}}\sum_{i=1}^{m}\frac{\sigma_{i}(\gamma^{-1}E_{0})}{(\sigma_{i}(\gamma^{-1}E_{0})+\epsilon)^{1-2\alpha}}\\
&&\leq\displaystyle\frac{1}{\sqrt{k}}\bigg(\sum_{i=1}^{m}\frac{(\sigma_{i}(\gamma^{-1}E_{0}))^{1/2}}{(\sigma_{i}(\gamma^{-1}E_{0})+\epsilon)^{1/2-\alpha}}\bigg)^{2}\\
&&=\displaystyle\frac{1}{\sqrt{k}}(TL_{\alpha}^{\epsilon}(\gamma^{-1}E_{0}))^{2}
\end{array}
\end{equation}
where the second inequality follows from (\ref{equ27}).

At the next step, we will derive two inequalities between the Frobenius norm and function $TL_{\alpha}^{\epsilon}$. Since
$$\frac{(\sigma_{i}(\gamma^{-1}E_{0}))^{1/2}}{(\sigma_{i}(\gamma^{-1}E_{0})+\epsilon)^{1/2-\alpha}}\leq \frac{(\sigma_{i}(\gamma^{-1}E_{0}))^{1/2}}{\epsilon^{1/2-\alpha}},$$
we have
\begin{equation}\label{equ31}
\begin{array}{llll}
TL_{\alpha}^{\epsilon}(\gamma^{-1}E_{0})&=&\displaystyle\sum_{i=1}^{m}\frac{(\sigma_{i}(\gamma^{-1}E_{0}))^{1/2}}{(\sigma_{i}(\gamma^{-1}E_{0})+\epsilon)^{1/2-\alpha}}\\
&\leq&\displaystyle\sum_{i=1}^{m}\frac{(\sigma_{i}(\gamma^{-1}E_{0}))^{1/2}}{\epsilon^{1/2-\alpha}}\\
&\leq& \|\gamma^{-1}E_{0}\|_{F}^{1/2}(2\beta)^{3/4}/\epsilon^{1/2-\alpha}\\
&\leq&\|\gamma^{-1}(E_{0}+E_{1})\|_{F}^{1/2}(2\beta)^{3/4}/\epsilon^{1/2-\alpha},
\end{array}
\end{equation}
where the second inequality follows from the H\"{o}lder's inequality.

Finally, we put all these together as

\begin{equation}\label{equ32}
\begin{array}{llll}
&&\|\mathcal{A}(\gamma^{-1}E)\|_{2}\\
&&=\|\mathcal{A}(\gamma^{-1}(E_{0}+E_{c}))\|_{2}\\
&&=\displaystyle\|\mathcal{A}(\gamma^{-1}(E_{0}+E_{1}))+\sum_{j\geq2}\mathcal{A}(\gamma^{-1}E_{j})\|_{2}\\
&&\geq\displaystyle\|\mathcal{A}(\gamma^{-1}(E_{0}+E_{1}))\|_{2}-\|\sum_{j\geq2}\mathcal{A}(\gamma^{-1}E_{j})\|_{2}\\
&&\geq\displaystyle\|\mathcal{A}(\gamma^{-1}(E_{0}+E_{1}))\|_{2}-\sum_{j\geq2}\|\mathcal{A}(\gamma^{-1}E_{j})\|_{2}\\
&&\geq\displaystyle\sqrt{1-\delta_{2\beta+k}(\mathcal{A})}\|\gamma^{-1}(E_{0}+E_{1})\|_{F}\\
&&\ \ \ \ -\displaystyle\sqrt{1+\delta_{k}(\mathcal{A})}\sum_{j\geq2}\|\gamma^{-1}E_{j}\|_{F}\\
&&=\displaystyle\epsilon^{1-2\alpha}(2\beta)^{-3/2}\sqrt{1-\delta_{2\beta+k}(\mathcal{A})}(TL_{\alpha}^{\epsilon}(\gamma^{-1}E_{0}))^{2}\\
&&\ \ \ \ -\displaystyle\sqrt{\frac{1+\delta_{k}(\mathcal{A})}{k}}(TL_{\alpha}^{\epsilon}(\gamma^{-1}E_{0}))^{2}.
\end{array}
\end{equation}

Since $\mathcal{A}(E)=\mathcal{A}(X^{\ast}-X_{0})=\mathbf{0}$, and the the factor
$$\epsilon^{1-2\alpha}(2\beta)^{-3/2}\sqrt{1-\delta_{2\beta+k}(\mathcal{A})}-\sqrt{\frac{1+\delta_{k}(\mathcal{A})}{k}}$$
is strictly positive, we can get that
$E_{0}=\mathbf{0}$.
Furthermore, according to (\ref{equ27}), we have $E_{c}=\mathbf{0}$. Therefore, $X^{\ast}=X_{0}$.
\end{proof}

\section{Iterative thresholding algorithm for solving the problem (RTLAMRM)}\label{section4}

In this section, we propose an $TL_{\alpha}^{\epsilon}$ iterative half thresholding (TLIHT) algorithm to solve the problem (RTLAMRM) for all $0\leq\alpha<1$ and $\epsilon>0$.

For any $\lambda,\mu, \epsilon\in(0,+\infty)$, $\alpha\in[0,1)$ and $Z\in \mathbb{R}^{m\times n}$, let
\begin{equation}\label{equ33}
C_{\lambda}(X)=\|\mathcal{A}(X)-b\|_{2}^{2}+\lambda \sum_{i=1}^{m}\frac{(\sigma_{i}(X))^{1/2}}{(\sigma_{i}(X)+\epsilon)^{1/2-\alpha}},
\end{equation}
\begin{equation}\label{equ34}
\begin{array}{llll}
C_{\lambda,\mu}(X, Z)&=&\mu\|\mathcal{A}(X)-b\|_{2}^{2}+\lambda\mu \displaystyle\sum_{i=1}^{m}\frac{(\sigma_{i}(X))^{1/2}}{(\sigma_{i}(Z)+\epsilon)^{1/2-\alpha}}\\
&&-\mu\|\mathcal{A}(X)-\mathcal{A}(Z)\|_{2}^{2}+\|X-Z\|_{F}^{2}
\end{array}
\end{equation}
and
\begin{equation}\label{equ35}
B_{\mu}(X)=X+\mu \mathcal{A}^{\ast}(b-\mathcal{A}(X)).
\end{equation}
Note that by (\ref{equ33}) and (\ref{equ34}), we have
\begin{equation}\label{equ36}
C_{\lambda,\mu}(X,X)=\mu C_{\lambda}(X).
\end{equation}

\begin{lemma}\label{lem7}
For any fixed $\lambda>0$, $\mu>0$, $\epsilon>0$, $\alpha\in[0,1)$ and $Z\in \mathbb{R}^{m\times n}$, if $X^{s}\in \mathbb{R}^{m\times n}$ is a global minimizer of $C_{\lambda,\mu}(X,Z)$, then
\begin{equation}\label{equ37}
X^{s}=\mathcal{H}_{\lambda\mu/(\sigma(Z)+\epsilon)^{1/2-\alpha}}(B_{\mu}(Z)),
\end{equation}
where $\mathcal{H}_{\lambda\mu/(\sigma(Z)+\epsilon)^{1/2-\alpha}}$ is obtained by replacing $\lambda/(\sigma(Z)+\epsilon)^{1/2-\alpha}$ with $\lambda\mu/(\sigma(Z)+\epsilon)^{1/2-\alpha}$ in $\mathcal{H}_{\lambda/(\sigma(Z)+\epsilon)^{1/2-\alpha}}$.
\end{lemma}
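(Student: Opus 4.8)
The plan is to reduce the matrix minimization of $C_{\lambda,\mu}(\cdot,Z)$ to a finite family of one-dimensional minimizations of exactly the type solved in Lemma~\ref{lem4}. First I would complete the square in $X$ in the expression (\ref{equ34}). Using $\langle\mathcal{A}(X),y\rangle=\langle X,\mathcal{A}^{\ast}(y)\rangle$ and expanding each squared norm, the term $\mu\|\mathcal{A}(X)\|_{2}^{2}$ arising from $\mu\|\mathcal{A}(X)-b\|_{2}^{2}$ cancels exactly against the one arising from $-\mu\|\mathcal{A}(X)-\mathcal{A}(Z)\|_{2}^{2}$, and the remaining $X$-dependent cross terms collapse to $-2\langle X,\,Z+\mu\mathcal{A}^{\ast}(b-\mathcal{A}(Z))\rangle=-2\langle X,B_{\mu}(Z)\rangle$. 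Collecting the $X$-free terms into a constant $c(Z)$ gives
\[
C_{\lambda,\mu}(X,Z)=\|X-B_{\mu}(Z)\|_{F}^{2}+\lambda\mu\sum_{i=1}^{m}\frac{(\sigma_{i}(X))^{1/2}}{(\sigma_{i}(Z)+\epsilon)^{1/2-\alpha}}+c(Z),
\]
so that minimizing $C_{\lambda,\mu}(\cdot,Z)$ over $X$ is equivalent to minimizing $G(X):=\|X-Y\|_{F}^{2}+\sum_{i=1}^{m}w_{i}(\sigma_{i}(X))^{1/2}$, where $Y:=B_{\mu}(Z)$ and $w_{i}:=\lambda\mu/(\sigma_{i}(Z)+\epsilon)^{1/2-\alpha}$.

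The second step separates the singular vectors from the singular values. Fixing the SVD $Y=U_{Y}[\mathrm{Diag}(\sigma(Y)),\mathbf{0}_{m,n-m}]V_{Y}^{\top}$, the von Neumann trace inequality gives $\langle X,Y\rangle\leq\sum_{i}\sigma_{i}(X)\sigma_{i}(Y)$ for every $X$, hence $\|X-Y\|_{F}^{2}\geq\sum_{i}(\sigma_{i}(X)-\sigma_{i}(Y))^{2}$, with equality attained by $X=U_{Y}[\mathrm{Diag}(\sigma(X)),\mathbf{0}_{m,n-m}]V_{Y}^{\top}$. Since the penalty $\sum_{i}w_{i}(\sigma_{i}(X))^{1/2}$ depends on $X$ only through $\sigma(X)$, a global minimizer may be taken of this form, and $G$ reduces to the constrained scalar problem $\min_{s_{1}\geq\cdots\geq s_{m}\geq 0}\sum_{i=1}^{m}\big[(s_{i}-\sigma_{i}(Y))^{2}+w_{i}s_{i}^{1/2}\big]$.

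The third step solves the decoupled scalar problems. Dropping the ordering constraint, the $i$-th summand $(s_{i}-\sigma_{i}(Y))^{2}+w_{i}s_{i}^{1/2}$ is minimized at $s_{i}=h_{w_{i}}(\sigma_{i}(Y))$ by Lemma~\ref{lem4}; it then remains to check that these values still satisfy $s_{1}\geq\cdots\geq s_{m}$, which follows from $\sigma_{1}(Y)\geq\cdots\geq\sigma_{m}(Y)$, the fact that the weights $w_{i}$ are nondecreasing in $i$ (since $1/2-\alpha\geq 0$), and the monotonicity of the half thresholding map recorded in Lemma~\ref{lem5}. Assembling the pieces, $X^{s}=U_{Y}[\mathrm{Diag}(H_{\lambda\mu/(\sigma(Z)+\epsilon)^{1/2-\alpha}}(\sigma(Y))),\mathbf{0}_{m,n-m}]V_{Y}^{\top}$, which by Definition~\ref{def3} is exactly $\mathcal{H}_{\lambda\mu/(\sigma(Z)+\epsilon)^{1/2-\alpha}}(B_{\mu}(Z))$, as claimed.

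I expect the middle step to be the main obstacle: because the coordinate-dependent weights $w_{i}$ make the penalty fail to be unitarily invariant, one cannot simply invoke an off-the-shelf unitarily-invariant reduction, and must instead combine the von Neumann inequality with the compatibility between the descending order of $\sigma(Y)$ and the ascending order of the $w_{i}$, and verify that the unconstrained scalar minimizers do not break the monotonicity constraint (this is precisely where Lemma~\ref{lem5} enters, and where the range $\alpha\in(1/2,1)$ would need separate care). By contrast, the completion of the square and the per-coordinate thresholding via Lemma~\ref{lem4} are routine; the overall scheme parallels (\cite{yu51}, Theorem~2.1) invoked for Lemma~\ref{lem6}, now with coordinate-dependent thresholds.
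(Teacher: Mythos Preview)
Your proposal is correct and follows the same route as the paper: complete the square to reduce $C_{\lambda,\mu}(\cdot,Z)$ to $\|X-B_{\mu}(Z)\|_{F}^{2}$ plus a weighted $\ell_{1/2}$ penalty on $\sigma(X)$, then apply the thresholding characterization. The paper's own proof is considerably terser than yours---after the rewriting it simply cites Lemma~\ref{lem6} without addressing the coordinate-dependent weights $w_{i}=\lambda\mu/(\sigma_{i}(Z)+\epsilon)^{1/2-\alpha}$, whereas you correctly supply the von Neumann inequality reduction and the monotonicity check via Lemma~\ref{lem5} that this invocation tacitly requires.
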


\begin{proof}
By definition, $C_{\mu}(X,Z)$ can be rewritten as
\begin{eqnarray*}
&&C_{\lambda,\mu}(X,Z)\\
&&=\|X-(Z-\mu \mathcal{A}^{\ast}\mathcal{A}(Z)+\mu \mathcal{A}^{\ast}(b))\|_{F}^{2}\\
&&\ \ \ +\lambda\mu \displaystyle\sum_{i=1}^{m}\frac{(\sigma_{i}(X))^{1/2}}{(\sigma_{i}(Z)+\epsilon)^{1/2-\alpha}}+\mu\|b\|_{2}^{2}\\
&&\ \ \ -\|Z-\mu \mathcal{A}^{\ast}\mathcal{A}(Z)+\mu \mathcal{A}^{\ast}(b)\|_{F}^{2}+\|Z\|_{F}^{2}\\
&&\ \ \ -\mu\|\mathcal{A}(Z)\|_{2}^{2}\\
&&=\|X-B_{\mu}(Z)\|_{F}^{2}+\lambda\mu \displaystyle\sum_{i=1}^{m}\frac{(\sigma_{i}(X))^{1/2}}{(\sigma_{i}(Z)+\epsilon)^{1/2-\alpha}}\\
&&\ \ \ +\mu\|b\|_{2}^{2}-\|B_{\mu}(Z)\|_{F}^{2}+\|Z\|_{F}^{2}-\mu\|\mathcal{A}(Z)\|_{2}^{2},
\end{eqnarray*}
which implies that minimizing $C_{\lambda,\mu}(X,Z)$ for any fixed  $\lambda>0$, $\mu>0$, $\epsilon>0$, $\alpha\in[0,1)$ and matrix $Z\in \mathbb{R}^{m\times n}$ is equivalent to
$$\min_{X\in \mathbb{R}^{m\times n}}\Big\{\|X-B_{\mu}(Z)\|_{F}^{2}+\lambda\mu \displaystyle\sum_{i=1}^{m}\frac{(\sigma_{i}(X))^{1/2}}{(\sigma_{i}(Z)+\epsilon)^{1/2-\alpha}}\Big\}.$$
By Lemma \ref{lem6}, the expression (\ref{equ37}) immediately follows.
\end{proof}

\begin{lemma}\label{lem8}
For any fixed $\lambda>0$, $\epsilon>0$ and $0<\mu<\frac{1}{\|\mathcal{A}\|_{2}^{2}}$. If $X^{\ast}$ is a global minimizer of $C_{\lambda}(X)$, then $X^{\ast}$ is also a global minimizer of
$C_{\lambda,\mu}(X,X^{\ast})$, that is
\begin{equation}\label{equ38}
C_{\lambda,\mu}(X^{\ast},X^{\ast})\leq C_{\lambda,\mu}(X,X^{\ast})
\end{equation}
for all $X\in \mathbb{R}^{m\times n}$.
\end{lemma}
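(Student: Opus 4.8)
The plan is to establish the inequality (\ref{equ38}) directly, by exhibiting $C_{\lambda,\mu}(X,X^{\ast})-C_{\lambda,\mu}(X^{\ast},X^{\ast})$ as a sum of terms that are each forced to be nonnegative: one by the global optimality of $X^{\ast}$ for $C_{\lambda}$, and one by the step-size restriction $\mu<1/\|\mathcal{A}\|_{2}^{2}$. By (\ref{equ36}) we have $C_{\lambda,\mu}(X^{\ast},X^{\ast})=\mu C_{\lambda}(X^{\ast})$, so the goal is to prove $C_{\lambda,\mu}(X,X^{\ast})\geq\mu C_{\lambda}(X^{\ast})$ for every $X\in\mathbb{R}^{m\times n}$.

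First I would rearrange the definition (\ref{equ34}) as the exact identity
\[
C_{\lambda,\mu}(X,X^{\ast})=\mu C_{\lambda}(X)+\Big(\|X-X^{\ast}\|_{F}^{2}-\mu\|\mathcal{A}(X-X^{\ast})\|_{2}^{2}\Big)+\lambda\mu\,R(X),
\]
where $R(X)=\sum_{i=1}^{m}(\sigma_{i}(X))^{1/2}\big[(\sigma_{i}(X^{\ast})+\epsilon)^{\alpha-1/2}-(\sigma_{i}(X)+\epsilon)^{\alpha-1/2}\big]$ measures the mismatch between the \emph{frozen} weights used in the surrogate $C_{\lambda,\mu}(\cdot,X^{\ast})$ and the true weights appearing in $C_{\lambda}$. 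The first term obeys $\mu C_{\lambda}(X)\geq\mu C_{\lambda}(X^{\ast})$ because $X^{\ast}$ globally minimizes $C_{\lambda}$; the middle term is at least $(1-\mu\|\mathcal{A}\|_{2}^{2})\|X-X^{\ast}\|_{F}^{2}\geq0$, since $\|\mathcal{A}(Y)\|_{2}\leq\|\mathcal{A}\|_{2}\|Y\|_{F}$ and $\mu<1/\|\mathcal{A}\|_{2}^{2}$. For $\alpha=1/2$ one has $R\equiv0$ and the proof is finished; for general $\alpha\in[0,1)$ the remaining task is to show that the possibly negative term $\lambda\mu\,R(X)$ cannot overcome the nonnegative part.

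To handle that, I would switch to an equivalent reformulation. Completing the square in (\ref{equ34}) exactly as in the proof of Lemma \ref{lem7} with $Z=X^{\ast}$ shows that $C_{\lambda,\mu}(X,X^{\ast})$ differs from
\[
G(X):=\|X-B_{\mu}(X^{\ast})\|_{F}^{2}+\lambda\mu\sum_{i=1}^{m}\frac{(\sigma_{i}(X))^{1/2}}{(\sigma_{i}(X^{\ast})+\epsilon)^{1/2-\alpha}}
\]
by a constant independent of $X$; hence (\ref{equ38}) is equivalent to the statement that $X^{\ast}$ is a global minimizer of $G$, which by Lemma \ref{lem7} (through Lemma \ref{lem6} and Definitions \ref{def2}--\ref{def3}) is the fixed-point identity $X^{\ast}=\mathcal{H}_{\lambda\mu/(\sigma(X^{\ast})+\epsilon)^{1/2-\alpha}}(B_{\mu}(X^{\ast}))$. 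I would derive this identity from the first-order optimality of the global minimizer $X^{\ast}$ of $C_{\lambda}$: the stationarity condition forces $\mathcal{A}^{\ast}(b-\mathcal{A}(X^{\ast}))$ to be a spectral matrix with the same singular vectors as $X^{\ast}$, so $B_{\mu}(X^{\ast})$ in (\ref{equ35}) shares those singular vectors, the minimization of $G$ decouples into the scalar problems of Lemma \ref{lem4}, and the decoupled optima are exactly $h_{\lambda\mu/(\sigma_{i}(X^{\ast})+\epsilon)^{1/2-\alpha}}(\sigma_{i}(B_{\mu}(X^{\ast})))$. The hard part, I expect, will be precisely this reconciliation of the global optimality of the nonconvex, spectrally non-smooth $C_{\lambda}$ with the thresholding fixed point: the vanishing singular values of $X^{\ast}$ must be treated separately, as they have to correspond to singular values of $B_{\mu}(X^{\ast})$ lying below the threshold $\frac{\sqrt[3]{54}}{4}\big(\lambda\mu/(\sigma_{i}(X^{\ast})+\epsilon)^{1/2-\alpha}\big)^{2/3}$, where $\varphi_{\alpha}^{\epsilon}$ has unbounded slope; and repeated singular values of $X^{\ast}$ have to be accommodated. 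This is also the point at which any sharper restriction on the admissible range of $\alpha$, should one be required, would have to be imposed.
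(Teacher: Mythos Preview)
Your opening decomposition is exactly the paper's argument. The paper's entire proof is the chain
\[
C_{\lambda,\mu}(X,X^{\ast})\ \geq\ \mu\Big[\|\mathcal{A}(X)-b\|_{2}^{2}+\lambda \sum_{i=1}^{m}\frac{(\sigma_{i}(X))^{1/2}}{(\sigma_{i}(X^{\ast})+\epsilon)^{1/2-\alpha}}\Big]\ \geq\ \mu C_{\lambda}(X^{\ast})=C_{\lambda,\mu}(X^{\ast},X^{\ast}),
\]
with the first step coming from $\|X-X^{\ast}\|_{F}^{2}-\mu\|\mathcal{A}(X-X^{\ast})\|_{2}^{2}\geq(1-\mu\|\mathcal{A}\|_{2}^{2})\|X-X^{\ast}\|_{F}^{2}\geq0$ and the second simply asserted from the global optimality of $X^{\ast}$ for $C_{\lambda}$. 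The paper never isolates your mismatch term $R(X)$; it treats the bracketed expression (whose penalty carries the frozen denominator $(\sigma_{i}(X^{\ast})+\epsilon)^{1/2-\alpha}$) as if it were $C_{\lambda}(X)$ and passes in one line to $C_{\lambda}(X^{\ast})$. Your observation that this step is only immediate when $\alpha=1/2$ is accurate; the paper supplies no further justification at that point.

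Your second route, however, is a genuine misstep relative to the paper's structure. Reducing (\ref{equ38}) to the fixed-point identity $X^{\ast}=\mathcal{H}_{\lambda\mu/(\sigma(X^{\ast})+\epsilon)^{1/2-\alpha}}(B_{\mu}(X^{\ast}))$ and then trying to verify that identity from first-order conditions on $C_{\lambda}$ is circular here: that fixed-point identity is precisely the content of Theorem~\ref{the2}, and the paper derives Theorem~\ref{the2} \emph{from} Lemma~\ref{lem8} (combined with Lemma~\ref{lem7}), not the other way around. Establishing the thresholding fixed point directly from stationarity of the nonconvex, nonsmooth $C_{\lambda}$---with the zero and repeated singular value analysis you outline---would be a substantially harder independent result, and it is not the mechanism the paper intends for this lemma.
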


\begin{proof}
The condition $0<\mu<\frac{1}{\|\mathcal{A}\|_{2}^{2}}$ implies that
\begin{eqnarray*}
&&\|X-X^{\ast}\|_{F}^{2}-\mu\|\mathcal{A}(X)-\mathcal{A}(X^{\ast})\|_{2}^{2}\\
&&\geq(1-\mu\|\mathcal{A}\|_{2}^{2})\|X-X^{\ast}\|_{F}^{2}\\
&&\geq0.
\end{eqnarray*}
Therefore, for any $X\in \mathbb{R}^{m\times n}$, we have
\begin{eqnarray*}
&&C_{\lambda,\mu}(X,X^{\ast})\\
&&=\mu\|\mathcal{A}(X)-b\|_{2}^{2}+\lambda\mu \sum_{i=1}^{m}\frac{(\sigma_{i}(X))^{1/2}}{(\sigma_{i}(X^{\ast})+\epsilon)^{1/2-\alpha}}\\
&&\ \ \ -\mu\|\mathcal{A}(X)-\mathcal{A}(X^{\ast})\|_{2}^{2}+\|X-X^{\ast}\|_{F}^{2}\\
&&\geq\mu\Big[\|\mathcal{A}(X)-b\|_{2}^{2}+\lambda \sum_{i=1}^{m}\frac{(\sigma_{i}(X))^{1/2}}{(\sigma_{i}(X^{\ast})+\epsilon)^{1/2-\alpha}}\Big]\\
&&\geq\mu C_{\lambda}(X^{\ast})\\
&&=C_{\lambda,\mu}(X^{\ast},X^{\ast}).
\end{eqnarray*}
This completes the proof.
\end{proof}

Lemma \ref{lem8} show us that, if $X^{\ast}$ is a global minimizer of $C_{\lambda}(X)$, it is also a global minimizer of $C_{\lambda,\mu}(X,Z)$ with $Z=X^{\ast}$.
Combing with Lemma \ref{lem6}, we now derive the following important alternative theorem, which underlies the algorithm to be proposed.

\begin{theorem}\label{the2}
For any fixed $\lambda>0$ and $0<\mu<\frac{1}{\|\mathcal{A}\|_{2}^{2}}$. Let $X^{\ast}\in \mathbb{R}^{m\times n}$ be a global solution of the problem (RTLAMRM) and
$B_{\mu}(X^{\ast})=X^{\ast}+\mu \mathcal{A}^{\ast}(b-\mathcal{A}(X^{\ast}))$ admit the following singular value decomposition
\begin{equation}\label{equ39}
B_{\mu}(X^{\ast})=U^{\ast}[\mathrm{Diag}(\sigma_{i}(B_{\mu}(X^{\ast}))),\mathbf{0}_{m,n-m}](V^{\ast})^{\top}.
\end{equation}
Then $X^{\ast}$ satisfies the following fixed point inclusion
\begin{equation}\label{equ40}
X^{\ast}=\mathcal{H}_{\lambda\mu/(\sigma(X^{\ast})+\epsilon)^{1/2-\alpha}}(B_{\mu}(X^{\ast}))\\
\end{equation}
where $\mathcal{H}_{\lambda\mu/(\sigma(X^{\ast})+\epsilon)^{1/2-\alpha}}$ is obtained by replacing $\lambda$ with $\lambda\mu/(\sigma(X^{\ast})+\epsilon)^{1/2-\alpha}$ in $\mathcal{H}_{\lambda}$,
which means that, per iteration, the singular values of matrix $X^{k+1}$ satisfy
\begin{equation}\label{equ41}
\begin{array}{llll}
&&\sigma_{i}(X^{k+1})=\\
&&\left\{
    \begin{array}{ll}
      h_{\lambda/(\sigma_{i}(X^{k})+\epsilon)^{1/2-\alpha}}(\sigma_{i}(B_{\mu}(X^{k}))), & \mathrm{if} \ {\sigma_{i}(B_{\mu}(X^{k}))> t^{\ast};} \\
      0, & \mathrm{if} \ {\sigma_{i}(B_{\mu}(X^{k}))\leq t^{\ast}.}
    \end{array}
  \right.
\end{array}
\end{equation}
for $i=1,\cdots,m$, where the threshold function $t^{\ast}$ is defined as
\begin{equation}\label{equ42}
t^{\ast}=\frac{\sqrt[3]{54}}{4}(\lambda\mu/(\sigma_{i}(X^{k})+\epsilon)^{1/2-\alpha})^{2/3}.
\end{equation}
\end{theorem}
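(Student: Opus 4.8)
The plan is to combine the two preliminary pillars already in place: Lemma~\ref{lem8}, which says that a global minimizer $X^{\ast}$ of the functional $C_{\lambda}(X)$ in \eqref{equ33} is also a global minimizer of the surrogate $C_{\lambda,\mu}(X,X^{\ast})$ whenever $0<\mu<1/\|\mathcal{A}\|_{2}^{2}$; and Lemma~\ref{lem7}, which gives the closed form of the global minimizer of $C_{\lambda,\mu}(X,Z)$ for a \emph{fixed} reference matrix $Z$ in terms of the matrix half thresholding operator applied to $B_{\mu}(Z)$. Setting $Z=X^{\ast}$ and chaining these two facts produces the fixed point inclusion \eqref{equ40}.

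First I would invoke Lemma~\ref{lem8}: since $X^{\ast}$ is a global solution of (RTLAMRM), i.e.\ a global minimizer of $C_{\lambda}(X)$, and since $0<\mu<1/\|\mathcal{A}\|_{2}^{2}$, we have $C_{\lambda,\mu}(X^{\ast},X^{\ast})\le C_{\lambda,\mu}(X,X^{\ast})$ for all $X\in\mathbb{R}^{m\times n}$. Hence $X^{\ast}$ is a global minimizer of the map $X\mapsto C_{\lambda,\mu}(X,X^{\ast})$, which is exactly $C_{\lambda,\mu}(X,Z)$ with the choice $Z=X^{\ast}$. Next I would apply Lemma~\ref{lem7} with this $Z=X^{\ast}$: any global minimizer $X^{s}$ of $C_{\lambda,\mu}(X,Z)$ satisfies $X^{s}=\mathcal{H}_{\lambda\mu/(\sigma(Z)+\epsilon)^{1/2-\alpha}}(B_{\mu}(Z))$. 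Substituting $Z=X^{\ast}$ and using that $X^{\ast}$ itself is such a minimizer yields $X^{\ast}=\mathcal{H}_{\lambda\mu/(\sigma(X^{\ast})+\epsilon)^{1/2-\alpha}}(B_{\mu}(X^{\ast}))$, which is \eqref{equ40}. Feeding in the SVD \eqref{equ39} of $B_{\mu}(X^{\ast})$ and unwrapping the definitions of $\mathcal{H}_{\lambda}$ (Definition~\ref{def2}), of the vector operator $H_{\lambda}$ (Definition~\ref{def1}), and of the scalar half thresholding function $h_{\lambda}$ together with its explicit support threshold $\tfrac{\sqrt[3]{54}}{4}\lambda^{2/3}$ from Lemma~\ref{lem4}, gives the componentwise description \eqref{equ41}--\eqref{equ42} of the singular values: those $\sigma_{i}(B_{\mu}(X^{k}))$ exceeding $t^{\ast}$ are shrunk via $h_{\lambda/(\sigma_{i}(X^{k})+\epsilon)^{1/2-\alpha}}$, the rest are set to zero. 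Passing from the fixed point statement about $X^{\ast}$ to the per-iteration statement about $X^{k+1}$ is just the bookkeeping of reading \eqref{equ40} as an update rule $X^{k+1}=\mathcal{H}_{\lambda\mu/(\sigma(X^{k})+\epsilon)^{1/2-\alpha}}(B_{\mu}(X^{k}))$.

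The only genuinely delicate point is to make sure the reference matrix used inside the thresholding operator is consistent: Lemma~\ref{lem7} is stated for a \emph{fixed} $Z$, and the substitution $Z=X^{\ast}$ is legitimate precisely because Lemma~\ref{lem8} certifies $X^{\ast}$ as a minimizer of $C_{\lambda,\mu}(\cdot,X^{\ast})$ — this self-referential closure is what turns a one-step minimization into a fixed point equation. I expect the main obstacle to be purely notational rather than mathematical: one must carefully track that $h_{\lambda\mu/(\sigma_i(X^{\ast})+\epsilon)^{1/2-\alpha}}$ and the threshold $t^{\ast}$ in \eqref{equ42} use the singular values of the \emph{previous} iterate $X^{k}$ (equivalently $X^{\ast}$ in the fixed-point form) as weights, while the argument being thresholded is $\sigma_i(B_{\mu}(X^{k}))$; conflating these two would break the statement. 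Beyond that, the proof is a direct concatenation of Lemmas~\ref{lem6}, \ref{lem7} and \ref{lem8}, so I would keep it short, essentially: "By Lemma~\ref{lem8}, $X^{\ast}$ minimizes $C_{\lambda,\mu}(X,X^{\ast})$; by Lemma~\ref{lem7} applied with $Z=X^{\ast}$, this minimizer equals $\mathcal{H}_{\lambda\mu/(\sigma(X^{\ast})+\epsilon)^{1/2-\alpha}}(B_{\mu}(X^{\ast}))$; expanding the operator via Definitions~\ref{def1}--\ref{def2} and Lemma~\ref{lem4} gives \eqref{equ41}."
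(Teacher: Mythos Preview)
Your proposal is correct and matches the paper's approach exactly: the paper does not give a formal proof of Theorem~\ref{the2} but simply remarks, immediately before stating it, that Lemma~\ref{lem8} shows $X^{\ast}$ minimizes $C_{\lambda,\mu}(X,X^{\ast})$ and that combining this with Lemma~\ref{lem6} (via Lemma~\ref{lem7}) yields the theorem. Your write-up is in fact more careful than the paper's own sketch, particularly in tracking that the weights $(\sigma_i(X^{k})+\epsilon)^{1/2-\alpha}$ come from the reference matrix while the thresholded quantities are the singular values of $B_{\mu}(X^{k})$.
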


With the representation (\ref{equ40}), the TLIHT algorithm for solving the problem (RTLAMRM) can be naturally given by
\begin{equation}\label{equ43}
X^{k+1}=\mathcal{H}_{\lambda\mu/(\sigma(X^{k})+\epsilon)^{1/2-\alpha}}(X^{k}+\mu \mathcal{A}^{\ast}(b-\mathcal{A}(X^{k})))
\end{equation}
for all $0\leq\alpha<1$ and $\epsilon>0$.

Next, we analyze the convergence of the above TLIHT algorithm, and the convergence of the TLIHT algorithm is very important in guaranteeing that the algorithm can be successfully applied.

\begin{theorem} \label{the3}
Given $\lambda>0$, let $\{X^{k}\}$ be the sequence generated by the TLIHT algorithm with the step size $\mu$ satisfying $0<\mu<\frac{1}{\|\mathcal{A}\|_{2}^{2}}$, then
\begin{description}
\item[$\mathrm{i)}$] The sequence $\{X^{k}\}$ is a minimization sequence, and the sequence $\{C_{\lambda}(X^{k})\}$ is decreasing and converges to $C_{\lambda}(X^{\ast})$,
where $X^{\ast}$ is any accumulation point of the sequence $\{X^{k}\}$.
\item[$\mathrm{ii)}$] The sequence $\{X^{k}\}$ is asymptotically regular, i.e., $\lim_{k\rightarrow\infty}\|X^{k+1}-X^{k}\|_{F}^{2}=0$.
\item[$\mathrm{iii)}$] Any accumulation point of the sequence $\{X^{k}\}$ is a stationary point of the problem (RTLAMRM).
\end{description}
\end{theorem}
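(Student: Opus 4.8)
\emph{Overall plan.} I would run the standard three–part convergence analysis for surrogate (majorization) iterative thresholding, organised entirely around the auxiliary functional $C_{\lambda,\mu}$ and the minimiser characterisation of Lemma~\ref{lem7}, Lemma~\ref{lem8} and Theorem~\ref{the2}, with the quadratic slack $\|X-Z\|_{F}^{2}-\mu\|\mathcal{A}(X)-\mathcal{A}(Z)\|_{2}^{2}\ge(1-\mu\|\mathcal{A}\|_{2}^{2})\|X-Z\|_{F}^{2}$ (valid since $0<\mu<1/\|\mathcal{A}\|_{2}^{2}$) producing the descent margin. The engine of everything is the sufficient–decrease inequality
\[
C_{\lambda,\mu}(X^{k+1},X^{k})\ \ge\ \mu\,C_{\lambda}(X^{k+1})+(1-\mu\|\mathcal{A}\|_{2}^{2})\,\|X^{k+1}-X^{k}\|_{F}^{2}.
\]
Granting it, Lemma~\ref{lem7} together with the update (\ref{equ43}) gives $C_{\lambda,\mu}(X^{k+1},X^{k})\le C_{\lambda,\mu}(X^{k},X^{k})=\mu C_{\lambda}(X^{k})$ by (\ref{equ36}), so $C_{\lambda}(X^{k})-C_{\lambda}(X^{k+1})\ge\mu^{-1}(1-\mu\|\mathcal{A}\|_{2}^{2})\|X^{k+1}-X^{k}\|_{F}^{2}\ge 0$. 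Since $C_{\lambda}\ge 0$ is bounded below, $\{C_{\lambda}(X^{k})\}$ is non-increasing and convergent; because $C_{\lambda}(X^{k})\le C_{\lambda}(X^{0})$ and $C_{\lambda}$ is coercive (each $\varphi_{\alpha}^{\epsilon}(\sigma)$ grows like $\sigma^{\alpha}$ for $\alpha>0$), the iterates stay in a bounded set, so accumulation points exist, and continuity of $C_{\lambda}$ (singular values depend continuously on the matrix and $\epsilon>0$) combined with monotonicity forces the common limit to be $C_{\lambda}(X^{\ast})$ for every accumulation point $X^{\ast}$. Since moreover each $X^{k+1}$ is chosen to minimise the surrogate, this is part~i).

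\emph{Parts ii) and iii).} Telescoping the descent inequality yields $\sum_{k\ge 0}\|X^{k+1}-X^{k}\|_{F}^{2}\le\frac{\mu}{1-\mu\|\mathcal{A}\|_{2}^{2}}C_{\lambda}(X^{0})<\infty$, hence $\|X^{k+1}-X^{k}\|_{F}^{2}\to 0$, which is part~ii). For part~iii), let $X^{k_{j}}\to X^{\ast}$; by ii) also $X^{k_{j}+1}\to X^{\ast}$. Because the half–thresholding map is genuinely discontinuous at its threshold, I would not pass to the limit in (\ref{equ43}) directly but instead in the variational inequality: by Lemma~\ref{lem7}, $C_{\lambda,\mu}(X^{k_{j}+1},X^{k_{j}})\le C_{\lambda,\mu}(Y,X^{k_{j}})$ for all $Y\in\mathbb{R}^{m\times n}$, and $C_{\lambda,\mu}$ is jointly continuous (again since the singular values are continuous in the matrix and $\epsilon>0$ keeps the weights finite), so letting $j\to\infty$ gives $C_{\lambda,\mu}(X^{\ast},X^{\ast})\le C_{\lambda,\mu}(Y,X^{\ast})$ for all $Y$. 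Thus $X^{\ast}$ minimises $C_{\lambda,\mu}(\cdot,X^{\ast})$ and, by Lemma~\ref{lem7}, satisfies the fixed–point inclusion (\ref{equ40}), which is exactly the first–order stationarity condition for (RTLAMRM) recorded in Theorem~\ref{the2}.

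\emph{Where the real work is.} Everything above is routine once the sufficient–decrease inequality holds, so that is the one substantial step. Expanding $C_{\lambda,\mu}(X^{k+1},X^{k})$ from (\ref{equ34}) and absorbing the quadratic slack, the inequality reduces to $\sum_{i}\sigma_{i}(X^{k+1})^{1/2}/(\sigma_{i}(X^{k})+\epsilon)^{1/2-\alpha}\ \ge\ TL_{\alpha}^{\epsilon}(X^{k+1})$, i.e.\ the reweighting built from the \emph{previous} singular values must dominate the true $TL_{\alpha}^{\epsilon}$ penalty at $X^{k+1}$. The obstacle is that the surrogate term $\sigma^{1/2}/(\sigma_{i}(X^{k})+\epsilon)^{1/2-\alpha}$ majorises $\varphi_{\alpha}^{\epsilon}(\sigma)$ only for $\sigma\ge\sigma_{i}(X^{k})$ and undershoots it for $\sigma<\sigma_{i}(X^{k})$, so the pointwise/MM argument fails whenever a singular value decreases; the inequality must instead be extracted from a global concavity/monotonicity property of $\varphi_{\alpha}^{\epsilon}$ (concave for $\alpha\le 1/2$) together with a rearrangement of the sorted singular value vectors of $X^{k}$ and $X^{k+1}$ --- and this is the step that pins down the admissible range of $\alpha$; failing that, one would instead track a weighted surrogate energy in place of $C_{\lambda}$ itself. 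A secondary point to dispose of is the non-coercive case $\alpha=0$, where $TL_{0}^{\epsilon}$ is bounded, so the boundedness of $\{X^{k}\}$ used implicitly in parts i) and iii) has to be recovered from the data–fidelity term under an injectivity or RIP–type hypothesis on $\mathcal{A}$ rather than from the penalty.
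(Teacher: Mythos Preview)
Your approach is the same surrogate/majorization framework the paper invokes (its entire proof is a one-line pointer to \cite{xu25}, Theorem~3), and your derivation of the descent margin from $\|X-Z\|_F^2-\mu\|\mathcal{A}(X)-\mathcal{A}(Z)\|_2^2\ge(1-\mu\|\mathcal{A}\|_2^2)\|X-Z\|_F^2$, the telescoping for part~ii), and the variational passage to the limit for part~iii) are all correct and in the spirit of \cite{xu25}.

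You are also right that the ``real work'' is the penalty comparison $\sum_i \sigma_i(X^{k+1})^{1/2}/(\sigma_i(X^{k})+\epsilon)^{1/2-\alpha}\ge TL_\alpha^\epsilon(X^{k+1})$, and that this is \emph{not} present in \cite{xu25}: there the penalty term in the surrogate does not depend on $Z$, so $C_{\lambda,\mu}(X,Z)\ge \mu C_\lambda(X)$ is automatic. Here the weights come from $Z=X^{k}$, and the paper's one-liner glosses over exactly the obstacle you flag. However, your first proposed fix---extracting the inequality from concavity of $\varphi_\alpha^\epsilon$ plus a rearrangement of sorted singular values---cannot work: already in the scalar case with $\alpha<1/2$, a single singular value that drops (say $\sigma(X^k)=10$, $\sigma(X^{k+1})=1$, $\epsilon$ small) makes the left side strictly smaller than the right, so no rearrangement can rescue the termwise or summed inequality. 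Your fallback---track a weighted surrogate energy rather than $C_\lambda$ itself---is the viable route, but then what decreases is not $C_\lambda(X^k)$ as stated in i), and you would need to close the gap by showing the two energies have the same limit along the subsequence, or by reformulating i). In short: your outline matches the paper's intended argument and is more honest about its weak point; but the weak point is a genuine gap that neither the paper nor your first suggested repair actually closes.
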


\begin{proof}
Its proof follows from the fact that the step size $\mu$ satisfying $0<\mu<\frac{1}{\|\mathcal{A}\|_{2}^{2}}$, and a similar argument as used in the proof of (\cite{xu25}, Theorem 3).
\end{proof}

As we all know, the quality of the solution to a regularization problem depends seriously on the setting of the regularization parameter $\lambda>0$. However, the selection of proper
parameter is a very hard problem and there is no optimal rule in general. In this paper, we suppose that the matrix $X^{\ast}$ of rank $r$ is the optimal solution of the regularization
problem (RTLAMRM), and set
\begin{equation}\label{equ44}
\lambda=\frac{\sqrt{96}(\sigma_{r+1}(B_{\mu}(X^{k}))^{3/2}(\sigma_{r+1}(X^{k})+\epsilon)^{1/2-p}}{9\mu}
\end{equation}
in each iteration. That is, (\ref{equ44}) can be used to adjust the value of
the regularization parameter $\lambda$ during iteration, and the TLIHT algorithm will be adaptive and free from the choice of regularization parameter $\lambda$. Moreover, we also find
that the the quantity of the solution of the MIHT algorithm also depends seriously on the setting of the parameter $\epsilon$. In TLIHT algorithm, a proper choice for the value of
$\epsilon$ at $k$-th iteration is given by
\begin{equation}\label{equ45}
\epsilon=\max\{\sigma_{r+1}(X^{k}), 10^{-3}\}.
\end{equation}

\begin{algorithm}[h!]
\caption{: TLIHT algorithm}
\label{alg:A}
\begin{algorithmic}
\STATE {\textbf{Input}: $\mathcal{A}: \mathbb{R}^{m\times n}\mapsto \mathbb{R}^{d}$, $b\in \mathbb{R}^{d}$}
\STATE {\textbf{Initialize}: $X^{0}\in \mathbb{R}^{m\times n}$, $\mu=\frac{1-\eta}{\|\mathcal{A}\|_{2}^{2}}(\eta\in(0,1))$, $\alpha\in[0,1)$ and $\epsilon>0$;}
\STATE {$k=0$;}
\STATE {\textbf{while} not converged \textbf{do}}
\STATE {$B_{\mu}(X^{k})=X^{k}-\mu \mathcal{A}^{\ast}\mathcal{A}(X^{k})+\mu \mathcal{A}^{\ast}(b)$;}
\STATE {Compute the SVD of $B_{\mu}(X^{k})$ as}
\STATE {$B_{\mu}(X^{k})=U^{k}[\mathrm{Diag}(\sigma_{i}(B_{\mu}(X^{k}))),\mathbf{0}_{m,n-m}](V^{k})^{\top}$;}
\STATE {$\lambda=\frac{\sqrt{96}(\sigma_{r+1}(B_{\mu}(z^{k})))^{3/2}(\lceil z^{k}\rfloor_{r+1}+\epsilon)^{1/2-\alpha}}{9\mu}$;}
\STATE {$t^{\ast}=\frac{\sqrt[3]{54}}{4}(\lambda\mu/(\sigma_{i}(B_{\mu}(z^{k}))+\epsilon)^{1/2-\alpha})^{2/3}$;}
\STATE \ \ \ \ {\textbf{for}\ $i=1:m$}
\STATE \ \ \ \ {1.\ $\sigma_{i}(B_{\mu}(X^{k}))>t^{\ast}$, then}
\STATE \ \ \ \ {$\sigma_{i}(B_{\mu}(X^{k+1}))=h_{\lambda\mu/(\sigma_{i}(X^{k})+\epsilon)^{1/2-\alpha}}(\sigma_{i}(B_{\mu}(X^{k})))$;}
\STATE \ \ \ \ {2.\ $\sigma_{i}(B_{\mu}(X^{k}))<t^{\ast}$, then}
\STATE \ \ \ \ {$\sigma_{i}(B_{\mu}(X^{k+1}))=0$;}
\STATE \ \ \ \ {\textbf{end}}
\STATE {$X^{k+1}=U^{k}[\mathrm{Diag}(\sigma_{i}(B_{\mu}(X^{k+1}))),\mathbf{0}_{m,n-m}](V^{k})^{T}$;}
\STATE {$k\rightarrow k+1$;}
\STATE {\textbf{end while}}
\STATE {\textbf{return}: $X^{k+1}$;}
\end{algorithmic}
\end{algorithm}

\section{Numerical experiments}\label{section5}
In this section, we present a series of numerical experiments to test the performance of the TLIHT algorithm for some matrix completion problems and compare it
with some state-of-art methods (SVT algorithm \cite{cai16} and SVP algorithm \cite{meka52}) in some grayscale image inpainting problems.

\subsection{Completion of random matrices}
In this subsection, we present a series of numerical experiments to test the performance of the TLIHT algorithm for some random low rank matrix completion problems. In our
experiments, we aim to recover a random matrix $X\in \mathbb{R}^{m\times n}$ with rank $r$ from a subset of observe entries, $\{M_{i,j}| (i,j)\in \Omega\}$. We generate
random matrices $M_{1}\in \mathbb{R}^{n\times r}$ and $M_{2}\in \mathbb{R}^{r\times n}$ with independent identically distributed Gaussian entries. Let $M=M_{1}M_{2}$,
and the matrix $M$ has rank at most $r$. We sample the observe set $\Omega$ with the sampling ratio $sr=s/mn$, where $s$ is the cardinality of the set $\Omega$. One quantity
helps to quantify the difficulty of a recovery problem is the freedom ratio $fr=s/r(m+n-r)$, which is the freedom of rank $r$ matrix divided by the number of measurement.
If $fr<1$, there is always an infinite number of matrices with rank $r$ with the given entries, so we cannot hope to recover the matrix in this situation \cite{ma22}. The
stopping criterion used in our algorithm is defined as follows:
$$\frac{\|X^{k}-X^{k-1}\|_{F}}{\|X^{k}\|_{F}}\leq \mathrm{Tol},$$
where $\mathrm{Tol}$ is a given small number. In our numerical experiments, we set $\mathrm{Tol}=10^{-8}$. Given an approximate recovery $X^{\ast}$ for $M$, the relative
error is defined as
$$\mathrm{RE}=\frac{\|X^{\ast}-M\|_{F}}{\|M\|_{F}}.$$
In order to implement our algorithm, we need to determine the parameter $\alpha$, which influences the behaviour of penalty function  $TL_{\alpha}^{\epsilon}(X)$. In the numerical
tests, we test our algorithm on a series of low rank matrix completion problems with different $\alpha$ values, and set $\alpha=0,0.1,0.3,0.4,0.5,0.6,0.7,0.9$, respectively. We
only take $m=n=100$, and the results are shown in Tables \ref{table1}, \ref{table2}. Comparing the performances of TLIHT algorithm for matrix completion problems with different
rank $r$, parameter $\alpha$ and $fr$, we can find that the parameters $\alpha=0, 0.1, 0.3, 0.4, 0.5$ seem to be the optimal strategy for our algorithm when $fr$ is closed to one.

\begin{table*}\footnotesize
\caption{\scriptsize Numerical results of TLIHT algorithm for matrix completion problems with different rank $r$, parameter $\alpha$ and $fr$ but fixed $n$, $sr=0.40$.}\label{table1}
\centering
\setlength{\tabcolsep}{3.5mm}{
\begin{tabular}{|c||l|l|l|l|l|l|l|l|}\hline
Problem&\multicolumn{2}{c}{$\alpha=0$}&\multicolumn{2}{|c}{$\alpha=0.1$}&\multicolumn{2}{|c}{$\alpha=0.3$}&\multicolumn{2}{|c|}{$\alpha=0.4$}\\
\hline
($n$,\,$r$,\,$fr$)&RE&Time&RE&Time&RE&Time&RE&Time\\
\hline
$(200,\,22,\,1.9240)$&1.73e-07& 1.70& 1.56e-07& 1.43& 1.69e-07& 1.85& 1.65e-07& 1.73\\
\hline
$(200,\,24,\,1.7730)$&1.95e-07& 1.67& 2.36e-07& 1.77& 1.91e-07& 2.04& 1.82e-07& 1.89\\
\hline
$(200,\,26,\,1.6454)$&2.17e-07& 1.96& 2.77e-07& 2.25& 2.32e-07& 2.18& 2.23e-07& 2.05\\
\hline
$(200,\,28,\,1.5361)$&2.80e-07& 2.45& 3.49e-07& 2.44& 3.32e-07& 2.57& 3.19e-07& 2.69\\
\hline
$(200,\,30,\,1.4414)$&4.15e-07& 3.08&3.47e-07& 3.04& 4.04e-07& 3.32&  4.05e-07& 3.22\\
\hline
$(200,\,32,\,1.3587)$&5.48e-07& 4.06& 4.84e-07& 3.91& 4.76e-07& 3.95& 5.08e-07& 4.03\\
\hline
$(200,\,34,\,1.2858)$&7.30e-07& 5.67& 8.67e-07& 5.98& 7.49e-07& 5.62& 6.70e-07& 5.21\\
\hline
$(200,\,36,\,1.2210)$&1.05e-06& 7.78& 1.04e-06& 7.39& 1.10e-06& 7.39& 1.02e-06& 7.59\\
\hline
$(100,\,38,\,1.1631)$&1.74e-06& 12.01& 1.62e-06& 11.20& 2.00e-06& 12.67& 1.69e-06& 11.66\\
\hline
$(200,\,40,\,1.1111)$&3.22e-06& 21.00& 3.22e-06& 20.73& 3.44e-06& 22.05& 3.08e-06& 20.27\\
\hline
$(200,\,42,\,1.0641)$&9.11e-06& 49.18& 9.19e-06& 50.56& 8.89e-06& 50.27& 8.97e-06& 50.83\\
\hline
$(200,\,43,\,1.0423)$&1.90e-05& 90.36& 1.97e-05& 95.82& 1.97e-05& 92.70& 1.88e-05& 96.10\\
\hline
$(200,\,44,\,1.0215)$&6.77e-05& 275.82& 6.92e-05& 267.57& 7.21e-05& 296.01& 6.97e-05& 271.25\\
\hline
$(200,\,45,\,1.0016)$&--& --&  --& --& --& --& --& --\\
\hline
\end{tabular}}
\end{table*}

\begin{table*}\footnotesize
\caption{\scriptsize Numerical results of TLIHT algorithm for matrix completion problems with different rank $r$,  parameter $\alpha$ and $fr$ but fixed $n$, $sr=0.40$.}\label{table2}
\centering
\setlength{\tabcolsep}{3.5mm}{
\begin{tabular}{|c||l|l|l|l|l|l|l|l|}\hline
Problem&\multicolumn{2}{c}{$\alpha=0.5$}&\multicolumn{2}{|c}{$\alpha=0.6$}&\multicolumn{2}{|c}{$\alpha=0.7$}&\multicolumn{2}{|c|}{$\alpha=0.9$}\\
\hline
($n$,\,$r$,\,FR)&RE&Time&RE&Time&RE&Time&RE&Time\\
\hline
$(200,\,22,\,1.9240)$&1.50e-07& 1.82& 5.88e-07& 3.60&  3.19e-07& 2.70&  4.05e-07& 3.89\\
\hline
$(200,\,24,\,1.7730)$&2.33e-07& 2.00& 5.64e-07& 3.31& 4.98e-07& 4.26& 5.30e-07& 4.51\\
\hline
$(200,\,26,\,1.6454)$&3.07e-07& 2.60& 7.49e-07& 3.95& 4.41e-07& 3.48& 6.89e-07& 4.57\\
\hline
$(200,\,28,\,1.5361)$&3.34e-07& 3.32& 6.62e-07& 4.30& 8.59e-07& 5.15& 7.38e-07& 5.84\\
\hline
$(200,\,30,\,1.4414)$&3.89e-07& 3.97& 8.04e-07& 5.40& 6.42e-07& 5.67& 7.74e-07& 6.16\\
\hline
$(200,\,32,\,1.3587)$&6.52e-07& 4.93& 9.18e-07& 8.24& 1.10e-06& 6.29& 1.09e-06& 8.32\\
\hline
$(200,\,34,\,1.2858)$&7.83e-07& 5.60& 1.83e-06& 9.17& 1.58e-06& 9.42& 2.13e-06& 8.89\\
\hline
$(200,\,36,\,1.2210)$&1.10e-06& 7.94& 2.19e-06& 11.52& 2.46e-06& 12.85& 1.86e-06& 12.99\\
\hline
$(100,\,38,\,1.1631)$&2.10e-06& 12.93& 1.83e-06& 12.79& 1.83e-06& 12.92& 1.97e-06& 13.46\\
\hline
$(200,\,40,\,1.1111)$&3.15e-06& 20.52& 4.49e-06& 24.95& 6.28e-06& 26.68& 8.19e-06& 36.35\\
\hline
$(200,\,42,\,1.0641)$&8.82e-06& 49.65& 9.57e-06& 54.70& 1.00e-05& 56.40& 1.15e-05& 60.02\\
\hline
$(200,\,43,\,1.0423)$&1.88e-05& 92.21& 1.89e-05& 94.03& 2.09e-05& 104.64& 2.35e-05& 110.39\\
\hline
$(200,\,44,\,1.0215)$&7.44e-05& 285.03&7.70e-05& 314.93& 7.18e-05& 304.40& 8.43e-05&306.12\\
\hline
$(200,\,45,\,1.0016)$&--& --&  --& --& --& --& --& --\\
\hline
\end{tabular}}
\end{table*}

\subsection{Application for image inpainting}

In this subsection, we demonstrate the performance of the TLIHT algorithm on some image inpainting problems. The TLIHT algorithm is tested on two standard $256\times 256$ grace images
(Peppers and Cameraman). We first use the singular value decomposition to obtain their approximated images with rank $r=30$. Original images and their corresponding approximated images
are displayed in Figs \ref{fig1} and \ref{fig2}. We take $sr=0.40$ and $sr=0.30$ for the two low rank images. We only take $\alpha=0.1,0.5$ in the TLIHT algorithm. Numerical results
of the three algorithms for image inpainting problems are reported in Table \ref{table3}. We display the recovered Peppers and Cameraman images via the three algorithms in Figs \ref{fig3},
\ref{fig4} respectively. We can see that the TLIHT algorithm with $\alpha=0.1$ performs the best in image inpainting problems compared with SVT algorithm and SVP algorithm.

\begin{figure}[h!]
  \centering
  \begin{minipage}[t]{0.45\linewidth}
  \centering
  \includegraphics[width=1.1\textwidth]{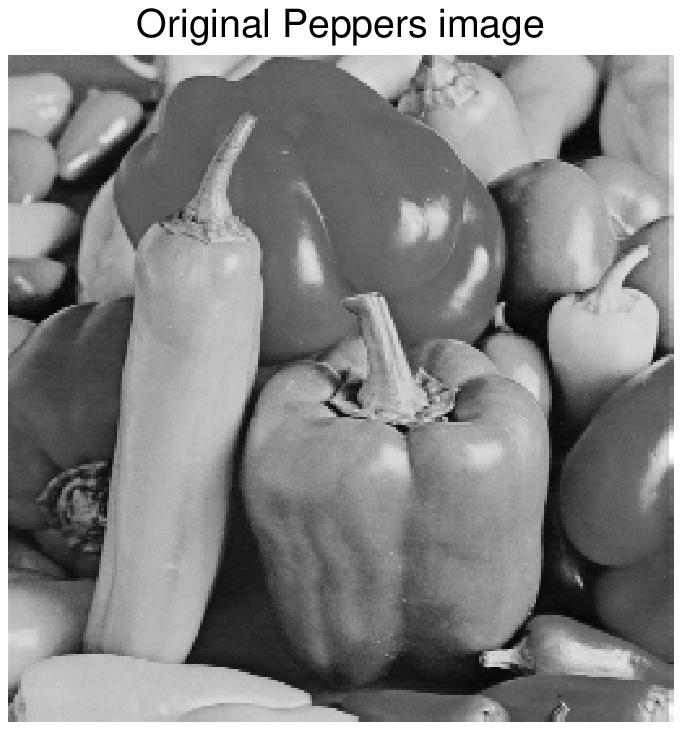}
  \end{minipage}
  \begin{minipage}[t]{0.45\linewidth}
  \centering
  \includegraphics[width=1.1\textwidth]{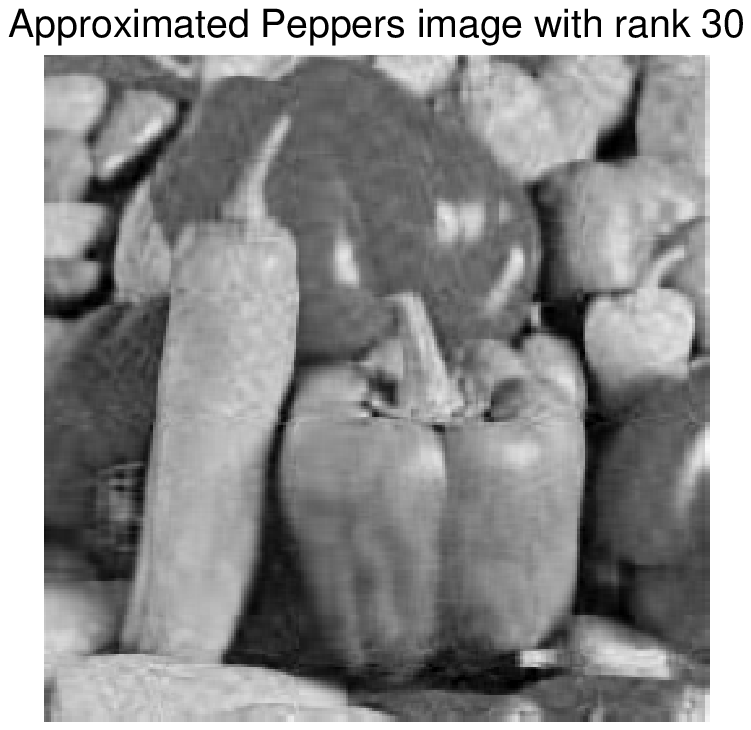}
  \end{minipage}
  \caption{Original $256\times 256$ Peppers image and its approximated image with rank 30} \label{fig1}
\end{figure}

\begin{figure}[h!]
  \centering
  \begin{minipage}[t]{0.45\linewidth}
  \centering
  \includegraphics[width=1.1\textwidth]{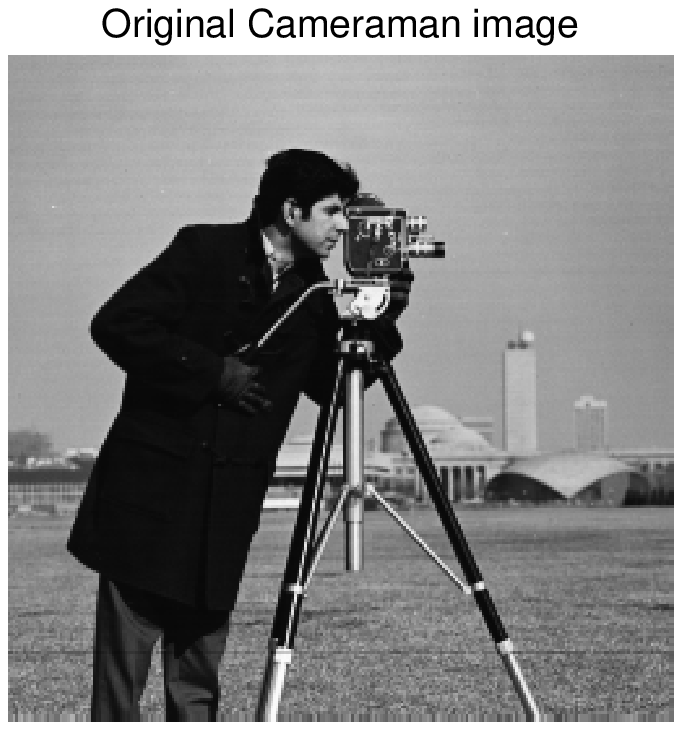}
  \end{minipage}
  \begin{minipage}[t]{0.45\linewidth}
  \centering
  \includegraphics[width=1.1\textwidth]{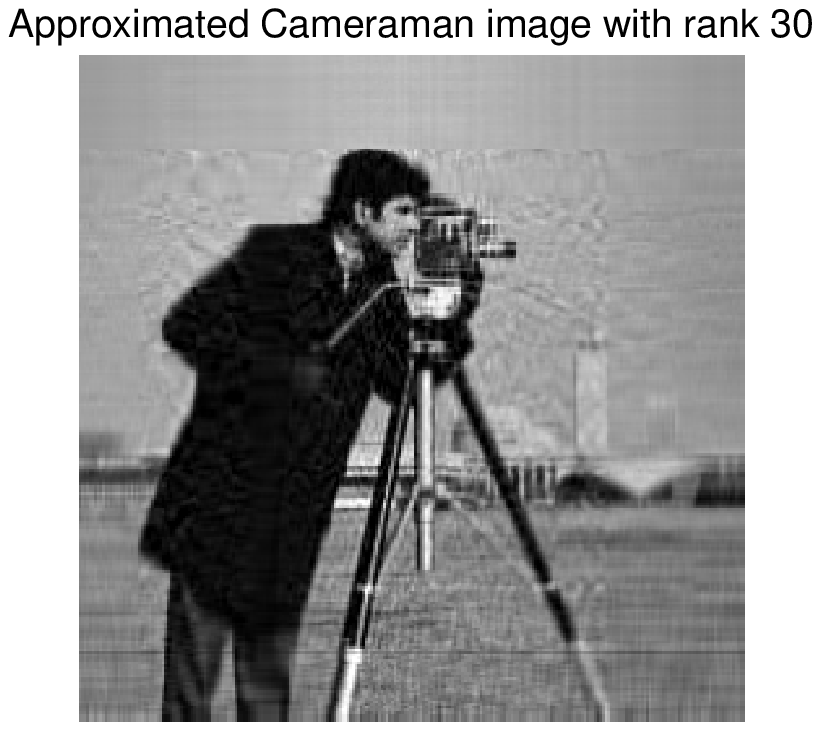}
  \end{minipage}
  \caption{Original $256\times 256$ Cameraman image and its approximated image with rank 30} \label{fig2}
\end{figure}

\begin{table*}\footnotesize
\caption{\scriptsize Numerical results of TLIHT, SVT and SVP algorithms for image inpainting problems}\label{table3}
\centering
\setlength{\tabcolsep}{3.5mm}{
\begin{tabular}{|c||l|l|l|l|l|l|l|l|}\hline
\multicolumn{9}{|c|}{$sr=0.40$}\\\hline
Image&\multicolumn{2}{c}{TLIHT, $\alpha=0.1$}&\multicolumn{2}{|c}{TLIHT, $\alpha=0.5$}&\multicolumn{2}{|c}{SVT}&\multicolumn{2}{|c|}{SVP}\\
\hline
(Name,\,rank,\,$fr$)&RE&Time&RE&Time&RE&Time&RE&Time\\
\hline
(Peppers, 30, 1.8129)& 3.66e-07& 5.08& 4.70e-07& 6.12&4.38e-02 &12.05 & 7.60e-01 & 1.79\\
\hline
(Cameraman, 30, 1.8129)& 1.08e-06& 11.85& 1.12e-06& 12.52& 7.99e-02& 7.84& 7.59e-01 & 2.35\\
\hline
\multicolumn{9}{|c|}{$sr=0.30$}\\\hline
Image&\multicolumn{2}{c}{TLIHT, $\alpha=0.1$}&\multicolumn{2}{|c}{TLIHT, $\alpha=0.5$}&\multicolumn{2}{|c}{SVT}&\multicolumn{2}{|c|}{SVP}\\
\hline
(Peppers, 30, 1.3597)& 3.36e-06& 33.16& 4.09e-06& 43.19&1.08e-01 &8.22& 8.25e-01 & 1.42\\
\hline
(Cameraman, 30, 1.3597)& 1.35e-05& 111.65& 2.97e-05& 173.958& 1.26e-01& 7.92& 8.26e-01 & 1.80\\
\hline
\end{tabular}}
\end{table*}

\begin{figure}[h!]
  \centering
  \begin{minipage}[t]{0.42\linewidth}
  \centering
  \includegraphics[width=1.1\textwidth]{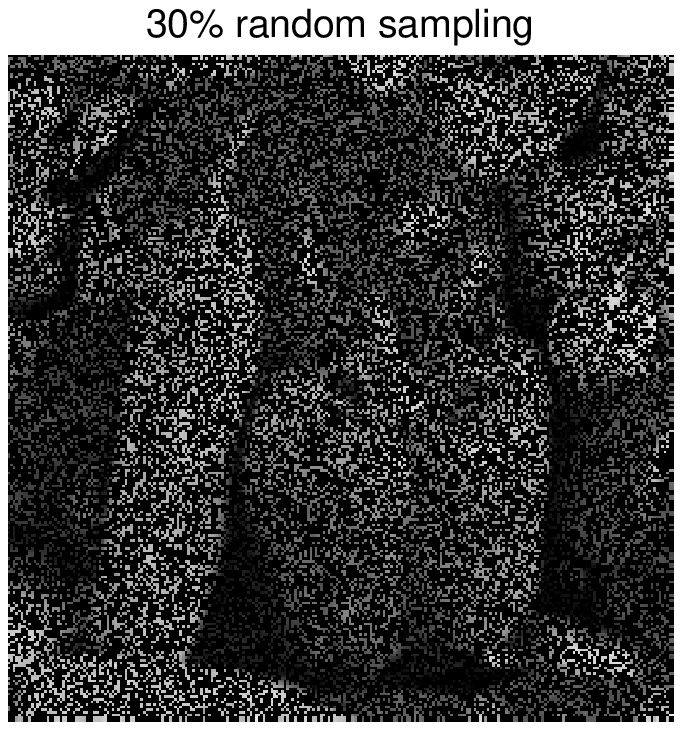}
  \end{minipage}\\
  \begin{minipage}[t]{0.42\linewidth}
  \centering
  \includegraphics[width=1.1\textwidth]{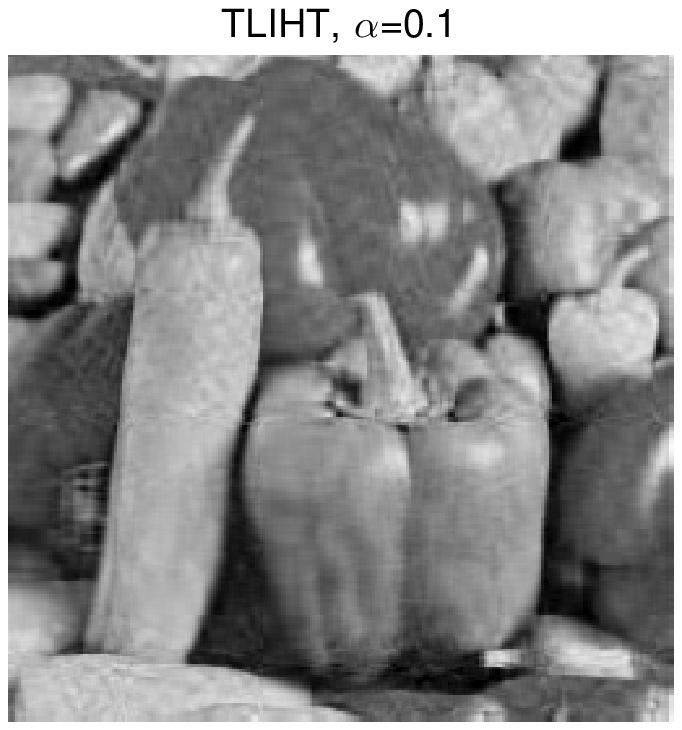}
  \end{minipage}
  \begin{minipage}[t]{0.42\linewidth}
  \centering
  \includegraphics[width=1.1\textwidth]{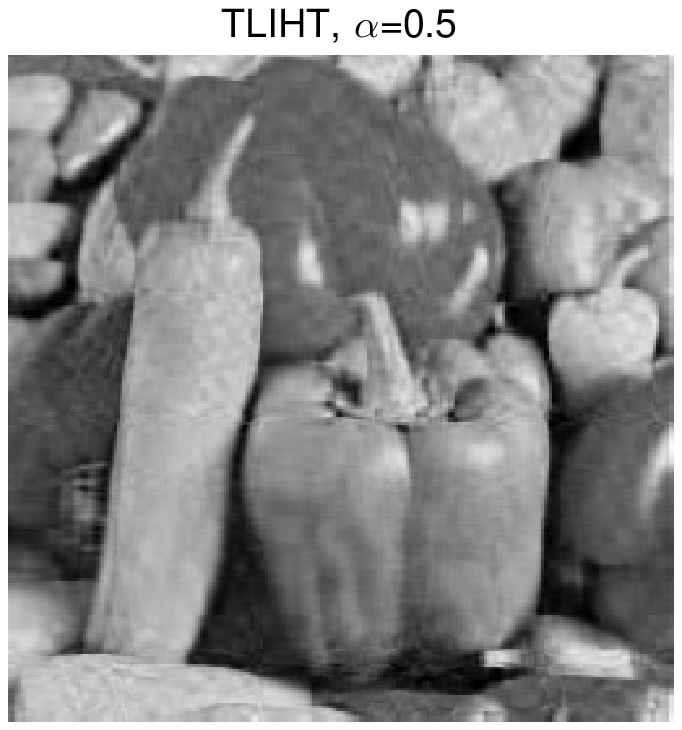}
  \end{minipage}
  \begin{minipage}[t]{0.42\linewidth}
  \centering
  \includegraphics[width=1.1\textwidth]{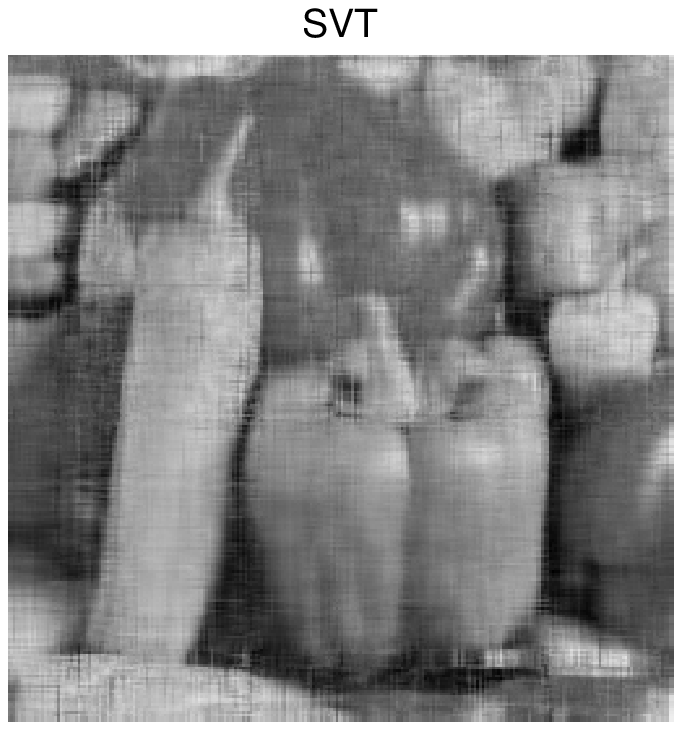}
  \end{minipage}
  \begin{minipage}[t]{0.42\linewidth}
  \centering
  \includegraphics[width=1.1\textwidth]{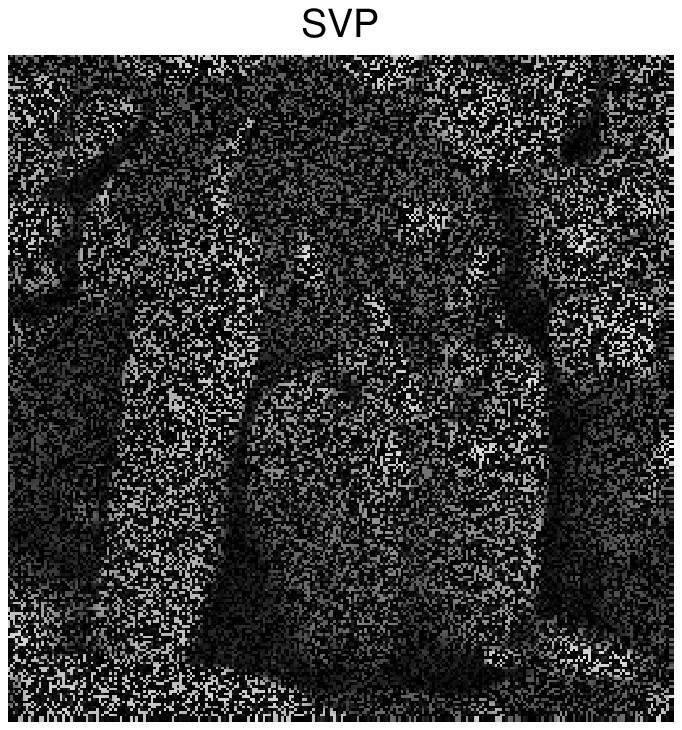}
  \end{minipage}
  \caption{Comparisons of TLIHT, SVT and SVP algorithms for recovering the approximated low-rank Peppers image with $sr=0.30$.} \label{fig3}
\end{figure}

\begin{figure}[h!]
  \centering
  \begin{minipage}[t]{0.42\linewidth}
  \centering
  \includegraphics[width=1.1\textwidth]{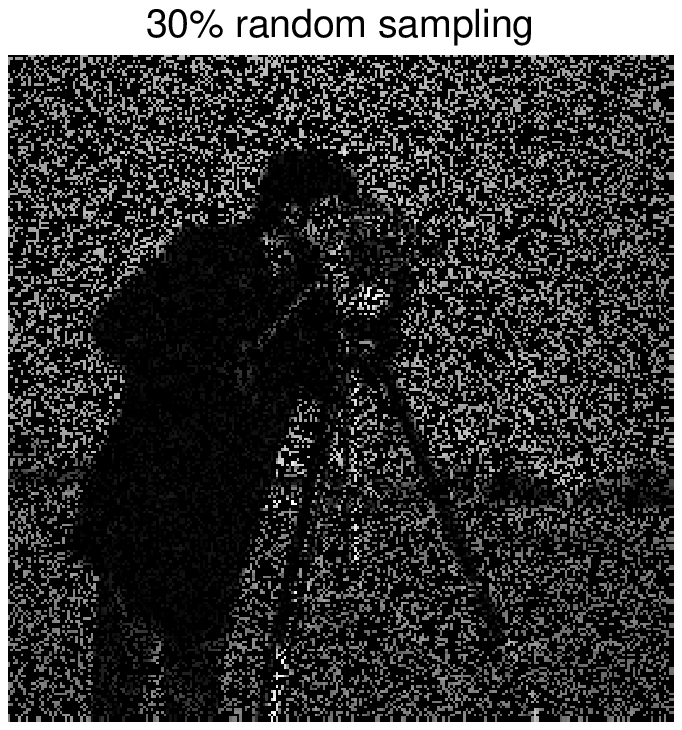}
  \end{minipage}\\
  \begin{minipage}[t]{0.42\linewidth}
  \centering
  \includegraphics[width=1.1\textwidth]{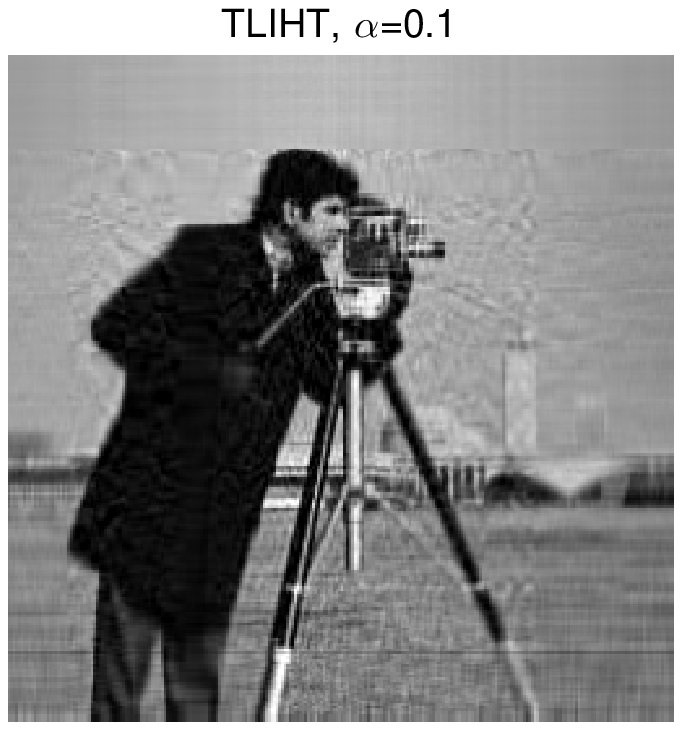}
  \end{minipage}
  \begin{minipage}[t]{0.42\linewidth}
  \centering
  \includegraphics[width=1.1\textwidth]{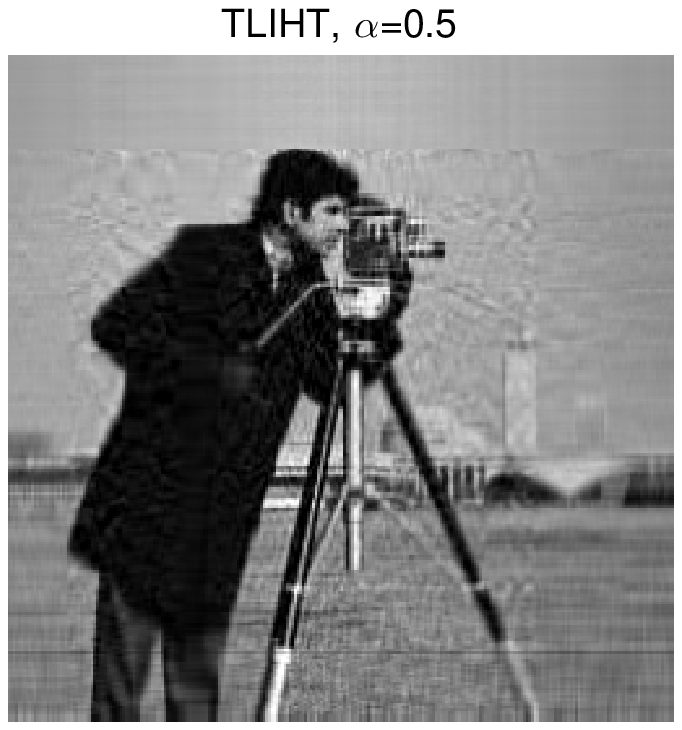}
  \end{minipage}
  \begin{minipage}[t]{0.42\linewidth}
  \centering
  \includegraphics[width=1.1\textwidth]{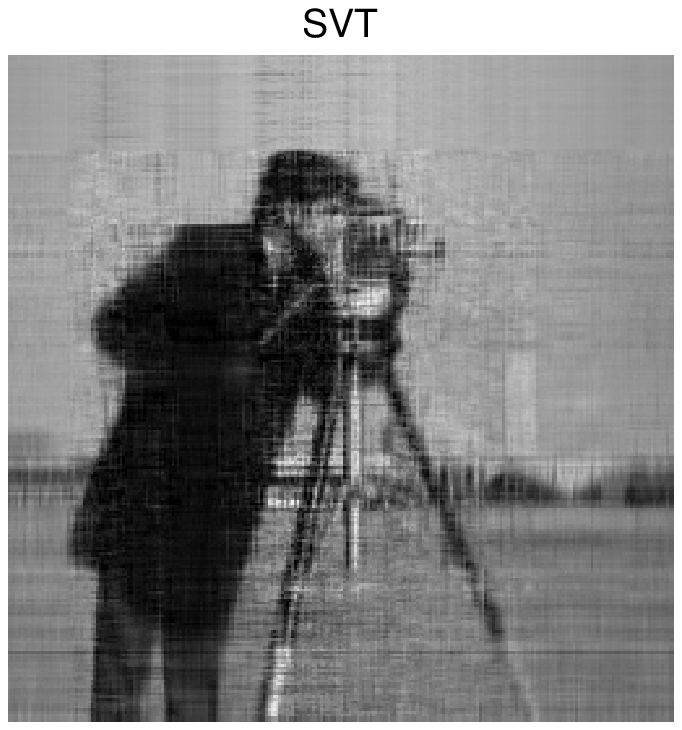}
  \end{minipage}
  \begin{minipage}[t]{0.42\linewidth}
  \centering
  \includegraphics[width=1.1\textwidth]{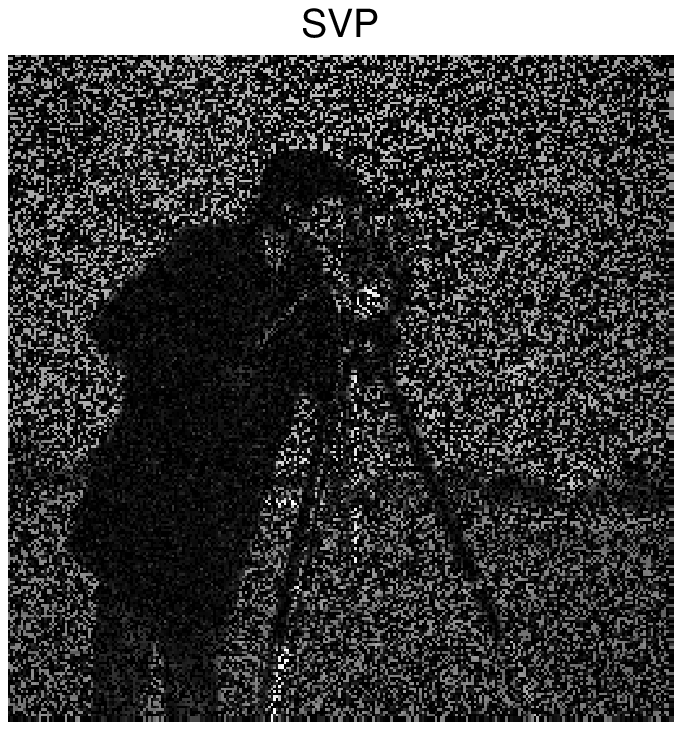}
  \end{minipage}
  \caption{Comparisons of TLIHT, SVT and SVP algorithms for recovering the approximated low-rank Cameraman image with $sr=0.30$.} \label{fig4}
\end{figure}

\section{Conclusion}\label{section6}

In this paper, we proposed a nonconvex function to approximate the rank function in the NP-hard problem (AMRM), and studied the transformed minimization problem in terms of theory,
algorithm and computation. We discussed the equivalence of problem (AMRM) and (TLAMRM), and the uniqueness of global minimizer of the problem (TLAMRM) also solves the NP-hard problem
(AMRM) if the linear map $\mathcal{A}$ satisfies a restricted isometry property (RIP). In addition, an iterative thresholding algorithm is proposed to solve the regularization problem
(RTLAMRM). Numerical results on low-rank matrix completion problems illustrated that our algorithm is able to recover a low-rank matrix, and the extensive numerical on image
inpainting problems shown that our algorithm performs the best in finding a low-rank image compared with some state-of-art methods.

\section*{Acknowledgment}
The work was supported by the National Natural Science Foundations of China (11771347, 91730306, 41390454, 11271297) and the Science Foundations of Shaanxi Province of
China (2016JQ1029, 2015JM1012).

\ifCLASSOPTIONcaptionsoff
  \newpage
\fi


\begin{thebibliography}{1}
%
\bibitem{faze1}
M. Fazel, H. Hindi and S. Boyd, A rank minimization heuristic with application to minimum order system approximation. In proceedings of American Control Conference, Arlington, VA, 6, 4734--4739 (2001)

\bibitem{faze2}
M. Fazel, H. Hindi and S. Boyd, Log-det heuristic for matrix minimization with applications to Hankel and Euclidean distance matrices. In Proceedings of American Control Conference, Denever, Colorado, 3, 2156--2162 (2003)

\bibitem{candes3}
E. J. Cand\`{e}s, B. Recht, Exact matrix completion via convex optimization. Foundations of Computational Mathematics, 9, 717--772 (2009)

\bibitem{jan4}
D. Jannach, M. Zanker, A. Felfernig and G. Friedrich, Recommender Systerm: An Introduction. Cambridge university press, New York (2012)


\bibitem{recht5}
B. Recht, M. Fazel and P. A. Parrilo, Guaranteed minimum-rank solution of linear matrix equations via nuclear norm minimization. SIAM Review, 52, 471--501 (2010)

\bibitem{candes6}
E. J. Cand\`{e}s, T. Tao, The power of convex relaxation: Near-optimal matrix completion. IEEE Transactions on Information Theory, 56, 2053--2080 (2010)

\bibitem{faze7}
M. Fazel, Matrix rank minimization with applications. PhD thesis, Stanford University (2002)

\bibitem{candes8}
E. J. Cand\`{e}s, Y. Plan, Matrix completion with noise. Proceedings of the IEEE, 98, 925--936 (2010)

\bibitem{cui9}
A. Cui, J. Peng, H. Li, C. Zhang, and Y. Yu, Affine matrix rank minimization problem via non-convex fraction function penalty. Journal of Computational and
Applied Mathematics, 336: 353--374, 2018.


\bibitem{ji11}
S. Ji, K. F. Sze and Z. Zhou, Beyond Convex Relaxation: A polynomial-time nonconvex optimization approach to network localization. INFOCOM, 2013 Proceedings IEEE, 12, 2499--2507 (2013)

\bibitem{singer12}
A. Singer, A remark on global positioning from local distances. Proceedings of the National Academy of Sciences of the United States of America, 105(28), 9507--9511 (2008)

\bibitem{srebro13}
N. Srebro, Learning with matrix factorizations. Ph.D. thesis, Massachusetts Institute of Technology (2004)

\bibitem{hu14}
Y. Hu, D. Zhang, J. Ye, X. Li, and X. He, Fast and accurate matrix completion via truncated nuclear norm regularization. IEEE Transactions on Pattern Analysis and Machine
Intelligence, 35(9), 2117--2130 (2013)

\bibitem{net15}
Netfix prize website. https://www.netflixprize.com/

\bibitem{cai16}
J. Cai, E. J. cand\`{e}s and Z. W. Shen, A singular value thresholding algorithm for matrix completion. SIAM Journal on Optimization, 20, 1956--1982 (2010)

\bibitem{cai17}
T. T. Cai, A. Zhang, Sparse representation of a polytope and recovery of sparse signals and Low-Rank matrices. IEEE Transactions on information theory, 60(1), 122--132 (2014)

\bibitem{tony18}
T. T. Cai, A. Zhang, Sharp RIP bound for sparse signal and low-rank matrix recovery. Applied and Computational Harmonic Analysis, 35, 74--93 (2013)

\bibitem{tut19}
R. H. T\"{u}t\"{u}nc\"{u}K, C. TohM, J. Todd, Solving semidefinite-quadratic-linear programs using SDPT3. Mathematical Programming, Ser. B 95: 189--217 (2003)

\bibitem{tho20}
K. C. Toh, S. Yun, An accelerated proximal gradient algorithm for nuclear norm regularized linear least squares problems. Pacific Journal of Optimization, 6(3), 615--640 (2010)


\bibitem{liu21}
Y. Liu, D. Sun ¡¤ K.-C. Toh, An implementable proximal point algorithmic framework for nuclear norm minimization. Mathematical Programming, Ser. A 133: 399¨C-43695 (2012)

\bibitem{ma22}
S. Ma, D. Goldfarb and L. Chen, Fixed point and Bregman iterative methods for matrix rank minimization. Mathematical Programming, Ser. A 128, 321--353 (2011)

\bibitem{gol23}
D. Goldfarb, S. Ma, Convergence of Fixed-Point continuation algorithms for matrix rank minimization. Foundations of computational mathematics, 11: 183--210 (2011)

\bibitem{char24}
R. Chartrand, Exact reconstruction of sparse signals via nonconvex minimization. IEEE Signal Processing Letters, 14,707--710 (2007)

\bibitem{xu25}
Z. Xu, X. Chang, F. Xu, H. Zhang, L1/2 regularization: A thresholding representation theory and a fast solver, IEEE Transactions on Neural Networks and Learning Systems 24(7), 1013--1027 (2012)

\bibitem{fou26}
S. Foucart, M. Lai, Sparsest solutions of underdetermined linear systems via $\ell_{q}$ minimization for $0<q\leq1$. Applied and Computational Harmonic Analysis, 26, 395--407 (2009)

\bibitem{lai27}
M. Lai, J. Wang, An unconstrained $\ell_{q}$ minimization with $0<q\leq1$ for sparse solution of underdetermined linear systems. SIAM Journal on Optimization, 21, 82--101 (2011)

\bibitem{chen28}
X. Chen, F. Xu and Y. Ye, Lower bound theory of nonzero entries in solutions of $\ell_{2}$-$\ell_{p}$ minimization. SIAM Journal on Scientific Computing, 32, 2832--2852 (2010)

\bibitem{dau29}
I. Daubechies, R. Devore, M. Fornasier and C. S. Gunturk, Iteratively reweighted least squares minimization for sparse recovery. Communications on Pure and Applied Mathematics, 63, 1--38 (2010)

\bibitem{mou30}
N. Mourad, J. P. Reilly, Minimizing nonconvex functions for sparse vector reconstruction. IEEE Transactions on Signal Processing, 58, 3485-3496 (2010)

\bibitem{sun31}
Q. Sun, Recovery of sparsest signals via $\ell_{q}$ minimization. Applied and Computational Harmonic Analisis, 32, 329--341 (2010)

\bibitem{char32}
R. Chartrand, V. Staneva, Restricted isometry properties and nonconvex compressive sensing. Inverse Problems,
24(3), 657--682 (2008)

\bibitem{peng33}
J. Peng, S. Yue and H. Li, NP/CMP Equivalence: A phenomenon hidden among sparsity models $\ell_{0}$ minimization and $\ell_{p}$ minimization for information processing. IEEE Transaction on Information Theory, 61, 4028--4033 (2015)

\bibitem{xu34}
Z. Xu, H. Zhang, Y. Wang, X. Chang and Y. Liang, L1/2 regularization. Science China Information Sciences, 53, 1159--1169 (2010)

\bibitem{cao35}
W. Cao, J. Sun, Z. Xu, Fast image deconvolution using closed-form thresholding formulas of $l_{q}(q=\frac{1}{2},\frac{2}{3})$ regularization, Journal of Visual Communication and Image Representation 24(1), 31--41 (2013)

\bibitem{zhang36}
C. Zhang, Nearly unbiased variable selection under minimax concave penalty. The Annals of Statistics, 38(2), 894--942 (2010)

\bibitem{fan37}
J. Fan, R. Li, Variable selection via nonconcave penalized likelihood and its oracle properties. Journal of the American Statistical Association, 96(456), 1348¨C-1360 (2001)

\bibitem{west38}
J. Weston, Elisseeff, A. B. Scholkopf and M. Tipping, Use of the zero-norm with linear models and kernel methods. Journal of Machine Learning Research, 3, 1439--1461 (2003)

\bibitem{Trzasko39}
J. Trzasko, A. Manduca, Highly undersampled magnetic resonance image reconstruction via homotopic $\ell_{0}$-minimization. IEEE Transactions on Medical Imaging, 28(1), 106--121 (2009)

\bibitem{thi40}
H. A. Le Thi, T. Pham Dinh, H. M. Le and X. T. Vo, DC approximation approaches for sparse optimization. European Journal of Operational Research, 244, 26--46 (2015)

\bibitem{zhang41}
T. Zhang, Analysis of multi-stage convex relaxation for sparse regularization. Journal of Machine Learning Research, 11, 1081--1107 (2010)

\bibitem{moh42}
H. Mohimani, M. Babaie-Zadeh, and C. Jutten, A fast approach for overcomplete sparse decomposition based on smoothed $\ell_{0}$-norm. IEEE Transactions on Signal Processing, 57(1), 289--301 (2009)

\bibitem{lu43}
C. Lu, J. Tang, S. Yan, Z. Lin, Nonconvex nonsmooth low rank minimization via iteratively reweighted nuclear norm. IEEE Transactions on Image Processing, 25(2), 829--839 (2016)

\bibitem{lu44}
Z. Lu, Y. Zhang, J. Lu, $\ell_{p}$ Regularized low-rank approximation via iterative reweighted singular value minimization. Computational Optimization and Applications, 68(3), 619--642 (2017)

\bibitem{li45}
S. Li, K. Li, Y. Fu, Self-Taught Low-Rank coding for visual learning. IEEE Transaction on Networks and Learning Systems, 29(3), 645--656 (2018)

\bibitem{zhang46}
Y. Zhang, G. Cai, J. Sun, Y. Wang, J. Chen, A new sparse Low-rank matrix decomposition method and its application on train passenger abnormal action identification. Neural Network World, 25(6), 657-668 (2015)

\bibitem{chen47}
Y. Chen, Y. Guo, Y. Wang, D. Wang, C. Peng, G. He, Denoising of hyperspectral images using nonconvex low rank matrix approximation. IEEE Transactions on Geoscience and Remote Sensing, 55(9),
5366--5380 (2017)

\bibitem{Moh48}
M. Malek-Mohammadi, Babaie-Zadeh, A. Amini, C. Jutten, Recovery of Low-Rank matrices under affine constraints via a smoothed rank function. IEEE Transactions on Signal Processing, 62(4), 981--992 (2014)

\bibitem{lai49}
M. Lai, Y. Xu, W. Yin, Improved iteratively reweighted least squares for unconstrained smoothed $\ell_{q}$ minimization. SIAM Journal on Numerical Analysis, 51 (2), 927-957 (2013)

\bibitem{moh50}
K. Mohan, M. Fazel, Iterative reweighted algorithms for matrix rank minimization. Journal of Machine Learning Research, 13, 3441-3473 (2012)


\bibitem{yu51}
Y. Yu, J. Peng, S. Yue, A new nonconvex approach to low-rank matrix completion with application to image inpainting. Multidimensional Systems and Signal Processing (2018). https://doi.org/10.1007/s11045-018-0549-5

\bibitem{meka52}
R. Meka, P. Jain, I. Dhillon, Guaranteed rank minimization via singular value projection, in: Proceeding of the Neural Information Processing Systems Conference (NIPS), 937-945 (2010)
\end{thebibliography}
\end{document}